\documentclass[11pt,a4paper,reqno]{amsart}
\usepackage[english]{babel}
\usepackage[T1]{fontenc}
\usepackage{verbatim}
\usepackage{palatino}
\usepackage{amsmath}
\usepackage{mathabx}
\usepackage{amssymb}
\usepackage{amsthm}
\usepackage{amsfonts}
\usepackage{graphicx}
\usepackage{esint}
\usepackage{color}
\usepackage{mathtools}
\usepackage{overpic}

\usepackage[colorlinks = true, citecolor = black]{hyperref}
\pagestyle{headings}
\author{Tuomas Orponen}
\title[$ABC$ sum-product problem]{Hausdorff dimension bounds for the \\ $ABC$ sum-product problem}
\address{Department of Mathematics and Statistics\\ University of Jyv\"askyl\"a,
P.O. Box 35 (MaD)\\
FI-40014 University of Jyv\"askyl\"a\\
Finland}
\email{tuomas.t.orponen@jyu.fi}
\date{\today}
\subjclass[2010]{11B30 (primary) 28A80 (secondary)}
\keywords{Discretised sum-product problem, Hausdorff dimension}
\thanks{T.O. is supported by the Academy of Finland via the projects \emph{Quantitative rectifiability in Euclidean and non-Euclidean spaces} and \emph{Incidences on Fractals}, grant Nos. 309365, 314172, 321896.}

\newcommand{\R}{\mathbb{R}}

\newcommand{\N}{\mathbb{N}}

\newcommand{\Z}{\mathbb{Z}}

\newcommand{\spt}{\operatorname{spt}}
\newcommand{\Hd}{\dim_{\mathrm{H}}}

\newcommand{\dist}{\operatorname{dist}}

\def\Barint_#1{\mathchoice
          {\mathop{\vrule width 6pt height 3 pt depth -2.5pt
                  \kern -8pt \intop}\nolimits_{#1}}%
          {\mathop{\vrule width 5pt height 3 pt depth -2.6pt
                  \kern -6pt \intop}\nolimits_{#1}}%
          {\mathop{\vrule width 5pt height 3 pt depth -2.6pt
                  \kern -6pt \intop}\nolimits_{#1}}%
          {\mathop{\vrule width 5pt height 3 pt depth -2.6pt
                  \kern -6pt \intop}\nolimits_{#1}}}

\numberwithin{equation}{section}

\theoremstyle{plain}
\newtheorem{thm}[equation]{Theorem}
\newtheorem*{"thm"}{"Theorem"}

\newtheorem{lemma}[equation]{Lemma}

\newtheorem{cor}[equation]{Corollary}

\newtheorem*{counter}{Counter assumption}

\theoremstyle{definition}

\newtheorem{definition}[equation]{Definition}

\theoremstyle{remark}
\newtheorem{remark}[equation]{Remark}

\addtolength{\hoffset}{-1.15cm}
\addtolength{\textwidth}{2.3cm}
\addtolength{\voffset}{0.45cm}
\addtolength{\textheight}{-0.9cm}

\newcommand{\nref}[1]{(\hyperref[#1]{#1})}

\DeclareMathSymbol{\intop}  {\mathop}{mathx}{"B3}

\begin{document} 

\begin{abstract} The purpose of this paper is to complete the proof of the following result. Let $0 < \beta \leq \alpha < 1$ and $\kappa > 0$. Then, there exists $\eta > 0$ such that whenever $A,B \subset \R$ are Borel sets with $\Hd A = \alpha$ and $\Hd B = \beta$, then
\begin{displaymath} \Hd \{c \in \R : \Hd (A + cB) \leq \alpha + \eta\} \leq \tfrac{\alpha - \beta}{1 - \beta} + \kappa. \end{displaymath}
This extends a result of Bourgain from 2010, which contained the case $\alpha = \beta$.

This paper is a sequel to the author's previous work from 2021 which, roughly speaking, established the same result with $\Hd (A + cB)$ replaced by $\dim_{\mathrm{B}}(A + cB)$, the box dimension of $A + cB$. It turns out that, at the level of $\delta$-discretised statements, the superficially weaker box dimension result formally implies the Hausdorff dimension result. \end{abstract}

\maketitle

\tableofcontents

\section{Introduction} 

This paper is a follow-up to \cite{2021arXiv211002779O}, whose main result had the following form. Let $\delta \in (0,1)$, and assume that $A,B,C$ are $\delta$-separated subsets of $[0,1]$. Under \emph{suitable assumptions} on $A,B,C$, and if $\delta,\epsilon > 0$ are small enough, there exists $c \in C$ such that $|A + cB|_{\delta} \geq \delta^{-\epsilon}|A|$. Here $|\cdot|_{\delta}$ is the $\delta$-covering number. The \emph{suitable assumptions} are those in Theorem \ref{main} (with $C := \spt(\nu)$), but their precise form is not relevant quite yet. One could easily deduce the following "continuous" corollary from the result above: under suitable assumptions on $A,B,C \subset [0,1]$, there exists $c \in C$ such that 
\begin{displaymath} \dim_{\mathrm{B}} (A + cB) \geq \Hd A + \epsilon. \end{displaymath}
Here $\dim_{\mathrm{B}}$ and $\Hd$ stand for box and Hausdorff dimensions, respectively. It would be more satisfactory to know that $\Hd (A + cB) \geq \Hd A + \epsilon$, and achieving this upgrade is the main purpose of the present paper.

What kind of $\delta$-discretised statement is needed to prove $\Hd (A + cB) \geq \Hd A + \epsilon$? This is well-known, see e.g. \cite[Theorem 1]{MR4148151}. One needs to know that if $A,B,C$ are $\delta$-separated subsets of $[0,1]$, then under suitable assumptions on $A,B,C$ one can locate $c \in C$ with the following property: for every subset $G \subset A \times B$ with $|G| \geq \delta^{\epsilon}|A||B|$, one has $|\pi_{c}(G)|_{\delta} \geq \delta^{-\epsilon}|A|$, where $\pi_{c}(x,y) = x + cy$. This follows from \cite{2021arXiv211002779O} in the special case $G = A \times B$, but this is too weak to yield any information about $\Hd (A + cB)$. Thus, the main result of \cite{2021arXiv211002779O} needs to be upgraded as follows:

\begin{thm}\label{main} Let $0 < \beta \leq \alpha < 1$ and $\kappa > 0$. Then, for every $\gamma \in ((\alpha - \beta)/(1 - \beta),1]$, there exist $\epsilon_{0},\epsilon,\delta_{0} \in (0,\tfrac{1}{2}]$, depending only on $\alpha,\beta,\gamma,\kappa$, such that the following holds. Let $\delta \in 2^{-\N}$ with $\delta \in (0,\delta_{0}]$, and let $A,B \subset (\delta \cdot \Z) \cap [0,1]$ satisfy the following hypotheses:
\begin{enumerate}
\item[(A)] \label{A} $|A| \leq \delta^{-\alpha}$.
\item[(B)] \label{B} $|B| \geq \delta^{-\beta}$, and $B$ satisfies the following Frostman condition: 
\begin{displaymath} |B \cap B(x,r)| \leq r^{\kappa}|B|, \qquad \delta \leq r \leq \delta^{\epsilon_{0}}. \end{displaymath} 
\end{enumerate}
Further, let $\nu$ be a Borel probability measure with $\spt (\nu) \subset [\tfrac{1}{2},1]$, and satisfying the Frostman condition $\nu(B(x,r)) \leq r^{\gamma}$ for $x \in \R$ and $0 < r \leq \delta^{\epsilon_{0}}$. Then, there exists a point $c \in \spt(\nu)$ such that the following holds: if $G \subset A \times B$ is any subset with $|G| \geq \delta^{\epsilon}|A||B|$, then 
\begin{displaymath} |\pi_{c}(G)| \geq \delta^{-\epsilon}|A|. \end{displaymath}
Here $\pi_{c}(x,y) := x + cy$.
\end{thm}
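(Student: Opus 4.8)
The proof will \emph{deduce} Theorem~\ref{main} from its box-counting predecessor --- the statement obtained by replacing ``for every subset $G$'' by ``for $G = A \times B$'', which is in substance the main result of \cite{2021arXiv211002779O}. The starting observation is that the conclusion of Theorem~\ref{main} is a property of the point $c$ alone, with no quantifier over $G$ once $c$ is fixed: writing $\mu_{c}$ for the push-forward under $\pi_{c}$ of the uniform probability measure on $A \times B$, a point $c$ \emph{fails} the conclusion if and only if there is a union $P$ of at most $\delta^{-\epsilon}|A|$ intervals of length $\delta$ with $\mu_{c}(P) \geq \delta^{\epsilon}$ --- given such a $P$, the set $G := \pi_{c}^{-1}(P) \cap (A \times B)$ is a counterexample, and $P := \pi_{c}(G)$ works for any counterexample $G$. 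Writing $\mathrm{Bad}$ for the set of such $c$, the task is to show $\spt(\nu) \not\subset \mathrm{Bad}$; I would aim for $\nu(\mathrm{Bad}) < 1$, and since $\nu$ runs over all measures satisfying the Frostman hypothesis of the theorem with exponent $\gamma > (\alpha-\beta)/(1-\beta)$, this amounts to the dimension bound $\Hd(\mathrm{Bad}) \leq (\alpha-\beta)/(1-\beta)$.

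Second, the box-counting statement self-improves to a \emph{measure-robust} form: for every $\nu$ as in the theorem,
\[ \nu\bigl(\{c : |\pi_{c}(A \times B)|_{\delta} < \delta^{-\epsilon_{1}}|A|\}\bigr) \leq \delta^{\epsilon_{1}} \]
for a suitable $\epsilon_{1} = \epsilon_{1}(\alpha,\beta,\gamma,\kappa) > 0$. Indeed, if the left-hand side exceeded $\delta^{\epsilon_{1}}$, the normalisation of $\nu$ restricted to that set would again satisfy a Frostman condition with a slightly smaller exponent $\gamma' > (\alpha-\beta)/(1-\beta)$, valid on exactly the scales the box-counting statement needs, and applying that statement to it would produce a point $c$ in the set with $|\pi_{c}(A \times B)|_{\delta} \geq \delta^{-\epsilon_{1}}|A|$ --- a contradiction. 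Call this set $\mathrm{Bad}_{0}$; since $\mathrm{Bad}_{0} \subset \mathrm{Bad}$, it remains to estimate $\nu(\mathrm{Bad} \setminus \mathrm{Bad}_{0})$, i.e. to handle the points $c$ for which $\pi_{c}(A \times B)$ is $\delta$-large but $\mu_{c}$ nonetheless concentrates a $\delta^{\epsilon}$-fraction of its mass on $\delta^{-\epsilon}|A|$ intervals.

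Suppose then that $\nu(\mathrm{Bad} \setminus \mathrm{Bad}_{0}) > \delta^{\epsilon}$, and fix for each such $c$ a counterexample $G_{c} = \pi_{c}^{-1}(P_{c}) \cap (A \times B)$ as above. Dyadic pigeonholing passes to the common size $\sim \lambda|A|$ of the fibres $G_{c}^{b}$ over a column set $B_{c} \subset B$, and to the common cardinality $\sim \mu|B|$ of $A \times B$ on the $\leq \delta^{-\epsilon}|A|$ lines of slope $-c$ comprising $\pi_{c}^{-1}(P_{c})$; here $\lambda, \mu \gtrsim \delta^{O(\epsilon)}$ and $\lambda|B_{c}| \gtrsim \delta^{O(\epsilon)}|B|$. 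Only $O(\log^{3}(1/\delta))$ combinations of these dyadic parameters occur, so a further pigeonhole over $c$ isolates a set $\mathcal{C} \subset \mathrm{Bad} \setminus \mathrm{Bad}_{0}$ with $\nu(\mathcal{C}) \gtrsim \delta^{\epsilon}/\log^{3}(1/\delta)$ on which $\lambda$ and $\mu$ are constant; renormalising $\nu|_{\mathcal{C}}$ gives a measure $\nu'$ still satisfying the Frostman hypothesis of the theorem with exponent above $(\alpha-\beta)/(1-\beta)$. The picture is now homogeneous: for every $c \in \spt(\nu')$ there is a family of $\leq \delta^{-\epsilon}|A|$ lines of slope $-c$, each carrying $\sim \mu|B|$ points of $A \times B$, whose union contains a $\delta^{O(\epsilon)}$-fraction of $A \times B$.

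\emph{The main step} --- and the part I expect to be the genuine obstacle --- is to contradict the box-counting theorem using this homogeneous bad picture. The difficulty is that a $\delta^{\epsilon}$-dense $G \subset A \times B$ need not contain a large Cartesian product, so one cannot simply apply the box-counting theorem to a single fibre $G_{c}^{b_{0}}$ paired with the columns over which it persists. My plan is to: (i) use the Frostman condition of hypothesis (B) to discard the $\leq (\delta^{\epsilon})^{\kappa}|B|$ columns within $\delta^{\epsilon}$ of any fixed one, after which the slopes $(a-a')/(b-b')$ between points of $A \times B$ on a common line can be read off up to a negligible error $\ll \delta^{1-\epsilon}$, so the lines genuinely point in direction $c$; (ii) run an incidence/popularity argument over $c \in \spt(\nu')$ --- fixing a point $(a_{0},b_{0})$ lying on a rich line for a $\nu'$-positive set of $c$, and analysing the large subsets of $B$ that $c$ maps into a fixed collection of $\leq \delta^{-\alpha}$ intervals --- and, after a final regularisation that trades a small power of $\delta$ in the cardinalities for genuine product structure, extract a single triple $(A',B',\nu'')$, with $\spt(\nu'') \subset \spt(\nu')$ and all hypotheses of Theorem~\ref{main} met for mildly perturbed exponents $\alpha' \leq \alpha$, $\beta' < \beta$, $\kappa' < \kappa$, such that $|\pi_{c}(A' \times B')|_{\delta} < \delta^{-\epsilon_{1}(\alpha',\beta',\gamma'',\kappa')}|A'|$ for \emph{every} $c \in \spt(\nu'')$; (iii) apply the box-counting theorem to $(A',B',\nu'')$ to locate a good point $c \in \spt(\nu'') \subset \mathrm{Bad}$ --- a contradiction. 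Everything outside step (ii) is dyadic pigeonholing and careful bookkeeping of the ambient parameters $\epsilon_{0}, \epsilon, \delta_{0}$ across the several applications of the box-counting statement.
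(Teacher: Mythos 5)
Your high-level shape is right---reduce to the box-counting result by a measure-robust reformulation and pigeonholing, and you correctly put your finger on the crux: \emph{a $\delta^{\epsilon}$-dense subset $G_c \subset A \times B$ need not contain a large Cartesian product}. But the plan you sketch for getting over that obstacle, step (ii), is where the argument would actually live, and as written it has a genuine gap. The ``incidence/popularity argument over $c$, fixing a point $(a_0,b_0)$ on a rich line for a $\nu'$-positive set of $c$, ... after a final regularisation that trades a small power of $\delta$ for genuine product structure'' is too vague to be checkable, and more importantly it elides two distinct hard steps that the paper handles with two distinct tools.

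First, the ``trade smallness for product structure'' move is precisely the content of the \emph{asymmetric Balog--Szemer\'edi--Gowers theorem} (Theorem~\ref{BSG} in the paper), which the paper invokes to upgrade each dense $G_c$ with $|\pi_c(G_c)|_\delta \lesssim |A|$ into a product $A_c \times B_c$, $|A_c||B_c| \geq \delta^{\zeta}|A||B|$, with $|A_c + cB_c|_\delta \leq \delta^{-\zeta}|A|$. Naming BSG here is not cosmetic: you need both sides of the product to become genuine sets, and the sumset bound it provides is what everything downstream feeds on.

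Second---and this is the part your plan really underestimates---even after BSG, the resulting $A_c$ and $B_c$ \emph{vary with $c$}, so you cannot ``extract a single triple $(A',B',\nu'')$'' directly and contradict the box-counting statement. The paper has to build a single set $H$ out of the various $c_m B_{c_1\cdots c_m}$ by iterated $\delta$-discretised sumsets, control $|H|$ via the Pl\"unnecke--Ruzsa inequality (Lemma~\ref{PRIneq}), pigeonhole over the $N$ construction steps to find a level $n$ where $|H_{n+1}| \lesssim \delta^{-1/(N-1)}|H_n|$, and only then obtain a contradiction. Even then the contradiction is not with the raw box-counting theorem: it is with an intermediate \emph{subset-robust} statement (Theorem~\ref{mainSubset1}, proved separately via an exhaustion argument from Theorem~\ref{mainSubset2}, which in turn needs a tightness lemma and Lemma~\ref{OVLemma}), because the sets $B_{c_1\cdots c_n c}$ that appear are shrinking subsets of $B$, not $B$ itself. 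Your proposal skips both the intermediate robustification of the box-counting statement in the $B$-variable and the aggregation scheme; without them the plan does not close. Step (i), discarding nearby columns by the Frostman condition and reading off slopes, has no counterpart in the paper and I don't see that it buys you the product structure you need.
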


With Theorem \ref{main} in hand, one can deduce the following corollary about $\Hd (A + cB)$ with standard arguments, which are nevertheless recorded in Section \ref{appA}:
\begin{cor}\label{hausdorffCor} Let $0 < \beta \leq \alpha < 1$ and $\kappa > 0$. Then, there exists $\eta = \eta(\alpha,\beta,\kappa) > 0$ such that if $A,B \subset \R$ are Borel sets with $\Hd A = \alpha$, $\Hd B = \beta$, then 
\begin{displaymath} \Hd \{c \in \R : \Hd (A + cB) \leq \alpha + \eta\} \leq \tfrac{\alpha - \beta}{1 - \beta} + \kappa. \end{displaymath}
In particular, $\Hd \{c \in \R : \Hd (A + cB) = \alpha\} \leq (\alpha - \beta)/(1 - \beta)$.
\end{cor}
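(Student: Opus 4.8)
The plan is to deduce Corollary~\ref{hausdorffCor} from Theorem~\ref{main} by a standard "pigeonholing into a single scale" argument, combined with a Frostman-measure reformulation of Hausdorff dimension. Suppose, for contradiction, that for every $\eta > 0$ there are Borel sets $A, B \subset \R$ with $\Hd A = \alpha$, $\Hd B = \beta$, and a Borel set $C \subset \R$ with $\Hd C > \tfrac{\alpha-\beta}{1-\beta} + \kappa$ such that $\Hd(A + cB) \le \alpha + \eta$ for all $c \in C$. Fix $\gamma \in (\tfrac{\alpha-\beta}{1-\beta}, \Hd C)$ close to the lower endpoint, and let $\epsilon_0, \epsilon, \delta_0$ be the constants provided by Theorem~\ref{main} for these values of $\alpha, \beta, \gamma, \kappa$. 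By Frostman's lemma, since $\Hd C > \gamma$, there is a Borel probability measure $\nu$ with $\spt(\nu) \subset C$ (and, after translating and rescaling $C$, we may assume $\spt(\nu) \subset [\tfrac12, 1]$) satisfying $\nu(B(x,r)) \le r^\gamma$ for all $x$ and all $r > 0$; truncating, this holds in particular for $0 < r \le \delta^{\epsilon_0}$. Similarly, since $\Hd B = \beta$, for any $\beta' < \beta$ there is a subset $B' \subset B$ of positive $\mathcal{H}^{\beta'}$-measure carrying a probability measure $\mu_B$ with $\mu_B(B(x,r)) \lesssim r^{\beta'}$; this gives a Frostman condition with exponent $\kappa$ in place of the sharp one once $\beta' > \kappa$, which we may arrange (the corollary is vacuous otherwise, or one simply shrinks $\kappa$).

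The core step is the $\delta$-discretisation. Because $\Hd(A + cB) \le \alpha + \eta$ for every $c \in C$, one shows — this is the content of the cited \cite[Theorem~1]{MR4148151}-type machinery, recorded in Section~\ref{appA} — that for a suitable sequence $\delta = \delta_j \to 0$ along $2^{-\N}$, one can find $\delta$-discretised sets $A_\delta \subset (\delta\Z) \cap [0,1]$ and $B_\delta \subset (\delta\Z) \cap [0,1]$ that are "$\delta$-models" of $A$ and $B$: $|A_\delta| \le \delta^{-\alpha}$ (using $\Hd A = \alpha$, so the covering numbers of $A$ do not grow faster than $\delta^{-\alpha-o(1)}$, and absorbing the $o(1)$), and $|B_\delta| \ge \delta^{-\beta'} \ge \delta^{-\beta''}$ with the Frostman condition $|B_\delta \cap B(x,r)| \le r^\kappa |B_\delta|$ for $\delta \le r \le \delta^{\epsilon_0}$, inherited from $\mu_B$. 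These are precisely hypotheses (A) and (B) of Theorem~\ref{main}. Theorem~\ref{main} then produces $c = c_\delta \in \spt(\nu) \subset C$ such that every $G \subset A_\delta \times B_\delta$ with $|G| \ge \delta^\epsilon |A_\delta||B_\delta|$ satisfies $|\pi_c(G)| \ge \delta^{-\epsilon}|A_\delta|$. On the other hand, the assumption $\Hd(A + cB) \le \alpha + \eta$ feeds back: for this particular $c \in C$, the set $A + cB$ has a cover by $\lesssim \delta^{-\alpha - 2\eta}$ intervals of length $\delta$ (at the chosen scale $\delta_j$, using the definition of Hausdorff dimension and passing to a further subsequence), so $|\pi_{c}(A_\delta \times B_\delta)|_\delta \lesssim \delta^{-\alpha - 2\eta}$; taking $G = A_\delta \times B_\delta$ one gets $\delta^{-\epsilon}|A_\delta| \le |\pi_c(G)| \lesssim \delta^{-\alpha-2\eta}$, hence $\delta^{-\epsilon} |A_\delta| \lesssim \delta^{-\alpha - 2\eta}$. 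Choosing $\eta := \epsilon/4$ (note $\epsilon$ depends only on $\alpha, \beta, \gamma, \kappa$, not on $\eta$, so this is consistent) and recalling we may also assume $|A_\delta| \ge \delta^{-\alpha+\epsilon/4}$ along the subsequence (since $\Hd A = \alpha$ forces the covering numbers to be $\ge \delta^{-\alpha + o(1)}$ infinitely often), we obtain $\delta^{-\alpha - \epsilon/2} \lesssim \delta^{-\alpha - \epsilon/2}$ with a gap — more carefully, $\delta^{-\epsilon}\delta^{-\alpha+\epsilon/4} \lesssim \delta^{-\alpha - \epsilon/2}$, i.e. $\delta^{-3\epsilon/4} \lesssim \delta^{-\epsilon/2}$, which fails for $\delta$ small. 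This contradiction completes the proof, and the final sentence ($\eta \to 0$ gives $\Hd\{c : \Hd(A+cB) = \alpha\} \le (\alpha-\beta)/(1-\beta)$) follows by letting $\kappa \to 0$ as well and taking a countable intersection / supremum over the parameters.

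The main obstacle — and the reason Section~\ref{appA} is needed rather than a one-line citation — is the bookkeeping in the discretisation: Hausdorff dimension is a statement about covers at all small scales simultaneously with arbitrarily slowly decaying "error" exponents, whereas Theorem~\ref{main} is a clean single-scale statement with quantitative exponents $\alpha, \beta$ (and gain $\epsilon$). One must extract a \emph{single} good scale $\delta$ (or a sparse sequence of them) at which: (i) the covering number of $A$ is squeezed between $\delta^{-\alpha - \eta}$ and $\delta^{-\alpha + \eta}$ (possible because $\Hd A = \alpha$ pins down the exponent to leading order, but only along a subsequence for the upper matching), (ii) the Frostman data for $B$ and for $\nu$ survive down to scale $\delta$ with the stated uniformity, and (iii) the hypothesis $\Hd(A + cB) \le \alpha + \eta$ can be converted — uniformly in $c \in C$, or at least for the specific $c_\delta$ returned by the theorem — into a $\delta$-covering bound $|{(A+cB)}|_\delta \le \delta^{-\alpha - 2\eta}$. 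Point (iii) is slightly delicate because the scale $\delta$ must be chosen \emph{before} we know $c_\delta$; the standard fix is to first fix $\delta$ so that (i) and (ii) hold, note that the theorem gives \emph{some} $c_\delta \in C$, and then use that for \emph{every} $c \in C$ one has $\liminf_\delta \frac{\log |{(A+cB)}|_\delta}{\log(1/\delta)} \le \alpha + \eta$ — upgrading this $\liminf$ to hold along our chosen sequence requires either passing to a subsequence adapted to a fixed countable dense set of candidate $c$'s, or, more robustly, running the whole argument with $A + cB$ replaced by its intersection with a dyadic cube and using a covering/pigeonhole argument so that a positive-$\nu$-measure set of $c$'s is good at scale $\delta$. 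I expect this last point to be the only genuinely technical part; everything else is routine measure-theoretic translation between continuous and $\delta$-discretised formulations.
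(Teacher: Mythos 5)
There are two genuine gaps, both at the discretisation step, and both stemming from the same confusion between Hausdorff and box dimension. First, you obtain hypothesis (A) of Theorem \ref{main} by asserting that $\Hd A = \alpha$ forces the covering numbers of $A$ to be at most $\delta^{-\alpha-o(1)}$. That is a statement about \emph{upper box} dimension, not Hausdorff dimension: a set of Hausdorff dimension $\alpha$ can satisfy $|A|_{\delta} \sim \delta^{-1}$ at every scale (any dense countable set has Hausdorff dimension $0$). So $|A_{\delta}| \leq \delta^{-\alpha}$ cannot be extracted from $\Hd A = \alpha$. In the paper this bound is instead derived \emph{from the hypothesis being contradicted}: after pigeonholing $\mu_{A}$ onto dyadic $\delta$-intervals of comparable mass $\rho_{A}$, the upper bound $|A_{\delta}| \approx \rho_{A}^{-1} \lessapprox |\pi_{c}(G_{c})|_{\delta} \leq \delta^{-\alpha-\eta}$ is obtained by slicing the covered set $G_{c}$ at a fixed $b \in B$, see \eqref{form101}.

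Second, and more seriously, your contradiction applies Theorem \ref{main} only to $G = A_{\delta} \times B_{\delta}$, after claiming $|\pi_{c}(A_{\delta}\times B_{\delta})|_{\delta} \lesssim \delta^{-\alpha-2\eta}$. This claim fails for the same reason: the hypothesis $\Hd(A+cB) \leq \alpha+\eta$ yields a cover of $A+cB$ by intervals of \emph{many different lengths}, and after pigeonholing to a single length $\delta = 2^{-j}$ one controls only the projection of the portion $G_{c} \subset A\times B$ lying in the scale-$\delta$ tubes --- a set of $\mu_{A}\times\mu_{B}$-measure $\gtrsim j^{-2}$, not all of $A\times B$. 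Your proposed fix via $\liminf_{\delta} \log|A+cB|_{\delta}/\log(1/\delta) \leq \alpha+\eta$ is also false, since lower box dimension can strictly exceed Hausdorff dimension. This is exactly why the subset quantifier in Theorem \ref{main} (``for every $G \subset A\times B$ with $|G|\geq \delta^{\epsilon}|A||B|$'') is indispensable: the theorem must be applied to the discretisation $G_{c,\delta}$ of $G_{c}$, whose projection is what the bound $|\pi_{c}(G_{c})|_{\delta}\leq\delta^{-\alpha-\eta}$ actually controls. If $G=A_{\delta}\times B_{\delta}$ sufficed, Theorem \ref{mainTechnical} from the earlier paper would already imply the corollary and the present paper would be unnecessary. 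Your closing paragraph correctly senses that point (iii) is delicate, but locates the difficulty in the order of quantifiers over $c$ rather than in the fact that the single-scale covering bound only ever applies to a subset of $A\times B$; the remaining skeleton (Frostman measures, choice of $\bar\gamma$, the final deduction for $\Hd(A+cB)=\alpha$) matches the paper.
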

The case $\alpha = \beta$ of Corollary \ref{hausdorffCor} is due to Bourgain \cite{Bourgain10}. In comparison, a classical exceptional set estimate of Kaufman \cite{Ka} would yield the weaker estimate $\Hd \{c \in \R : \Hd (A + cB) = \alpha\} \leq \alpha$, which is independent of $\beta = \Hd B$. Another classical estimate, due to Falconer \cite{MR673510} and Peres-Schlag \cite{MR1749437} shows that $\Hd \{c \in \R : \Hd (A + cB) = \alpha\} \leq 1 - \beta$. This is weaker than Corollary \ref{hausdorffCor} for $0 < \beta \leq \alpha < \tfrac{3}{4}$, but if $\alpha > \tfrac{3}{4}$, is stronger for e.g. $\beta = \tfrac{1}{2}$. It seems plausible that the optimal bound is $\Hd \{c \in \R : \Hd (A + cB) \leq \alpha\} \leq \alpha - \beta$. For a more extensive discussion on this conjecture, and related literature, see the introduction to \cite{2021arXiv211002779O}.

\subsection{Proof outline} The main theorem of \cite{2021arXiv211002779O} is recorded in Theorem \ref{mainTechnical} below: it is simply Theorem \ref{main} with the weaker conclusion $|A + cB|_{\delta} \geq \delta^{-\epsilon}|A|$. In summary, the outline of the paper is: \emph{Theorem \ref{mainTechnical} formally implies Theorem \ref{main}.} I mention that the threshold $\gamma > (\alpha - \beta)/(1 - \beta)$ of Theorem \ref{main} plays no role in the present paper, except that it is required by Theorem \ref{mainTechnical}. If a variant of Theorem \ref{mainTechnical} were known with, say, $\gamma > \alpha - \beta$, then the argument of this paper would demonstrate Theorem \ref{main} (and Corollary \ref{hausdorffCor}) with the same threshold. The lower bound $\gamma > \alpha - \beta$ would be sharp if true, see \cite{2021arXiv211002779O}. 

The reduction from Theorem \ref{main} to Theorem \ref{mainTechnical} proceeds in several stages. First, in Section \ref{s:toy}, we prove the following toy version of Theorem \ref{main}: instead of allowing for general subsets of the form $G \subset A \times B$ with $|G| \geq \delta^{\epsilon}|A||B|$, this version (Theorem \ref{mainSubset1}) only treats subsets of the form $G = A \times B'$ with $|B'| \geq \delta^{\epsilon}|B|$. The conclusion is that there exists $c \in \spt(\nu)$ such that $|A + cB'| \geq \delta^{-\epsilon}|A|$ for all $B' \subset B$ with $|B'| \geq \delta^{\epsilon}|B|$. 

Even the toy version, Theorem \ref{mainSubset1}, is not proved directly: we will pass through a toy-toy version, Theorem \ref{mainSubset2}, where we are first allowed to replace $A \times B$ by a subset of the form $A \times \bar{B}$, and then the conclusion explained above is established for $A \times \bar{B}$ in place of $A \times B$. Fortunately, the passage between the toy and toy-toy versions can be accomplished by a formal exhaustion argument, which I learned from He's paper \cite{MR4148151}.

The toy-toy version is eventually deduced, in Section \ref{s:subsetReduction}, by a direct argument from the main result in \cite{2021arXiv211002779O}. This is the core of the paper. Instead of giving details here, I mention a key difficulty: this reduction, and various other steps of the argument (including those in \cite{2021arXiv211002779O}) would be simpler if we \emph{a priori} knew that 
\begin{equation}\label{form24} |A + A| \approx |A| \quad \text{and} \quad |B + B| \approx |B|. \end{equation}
(In this heuristic discussion, I will leave the meaning of "$\approx$" to the reader's imagination.) In the case $|A| \approx |B|$, treated by Bourgain in \cite{Bourgain10}, this is automatic: if $|A + cB|_{\delta} \approx |A| \approx |B|$ for some $c \in [\tfrac{1}{2},1]$, then \eqref{form24} holds by Pl\"unnecke's inequality. However, in our situation $B$ is typically much smaller than $A$, and now the property $|A + cB|_{\delta} \approx |A|$ implies neither property in \eqref{form24}. Nevertheless, \eqref{form24} is needed, technically because Lemma \ref{OVLemma} is useless without \eqref{form24}. Roughly speaking, Theorem \ref{mainSubset2} is proved by making a counter assumption, and using it to generate new sets $\bar{A} \neq A$ and $\bar{B} \neq B$ which satisfy the original hypotheses, and additionally \eqref{form24}. At some level, this argument is reminiscent of the proof of the asymmetric Balog-Szemer\'edi-Gowers theorem in \cite{MR2289012} (see Theorem \ref{BSG}).

Once we have the toy version, Theorem \ref{mainSubset1}, at our disposal, it remains to deduce Theorem \ref{main} from Theorem \ref{mainSubset1}. This step is \emph{based} on the asymmetric Balog-Szemer\'edi-Gowers theorem -- unlike the other steps. We make a counter assumption that for every $c \in \spt(\nu)$ there exists a subset $G_{c} \subset A \times B$ with $|G| \gtrapprox |A||B|$ such that $|\pi_{c}(G)|_{\delta} \lessapprox |A|$. By the B-S-G theorem, this yields for every $c \in \spt(\nu)$ subsets $A_{c} \subset A$ and $B_{c} \subset B$ such that $|A_{c}| \gtrapprox |A|$, $|B_{c}| \gtrapprox |B|$, and $|A_{c} + cB_{c}| \lessapprox |A|$. With the help of probabilistic arguments, and the Pl\"unnecke-Ruzsa inequality (Lemma \ref{PRIneq}), this allows us to construct a new $\delta$-separated set $H \subset [0,1]$ with $|H| \lessapprox |A|$, and a subset $C \subset \spt(\nu)$ with $\nu(C) \gtrapprox 1$, such that $|H + cB_{c}| \lessapprox |H|$ for all $c \in C$. This violates the first toy version, Theorem \ref{mainSubset1}, applied to $H,B$ and finally concludes the proof of Theorem \ref{main}.

\subsection{Notation}  The notation $|A|$ stands for the cardinality of a finite set $A \subset \R^{d}$, typically $A \subset \delta \cdot \Z^{d}$ for some $\delta > 0$. For a dyadic rational $r \in 2^{-\N}$, and a bounded set $A \subset \R$, we write $|A|_{r}$ for the least number of dyadic intervals of length $\delta$ required to cover $A$ (in the introduction, we used the same notation for the $r$-covering number, which is comparable up to a multiplicative constant). We will also write $A(r)$ for the open $r$-neighbourhood of $A$, and $A_{r} = (r \cdot \Z) \cap A(r)$ (this notation is only used with $r = \delta$).

If $f,g \geq 0$, the notation $f \lesssim g$ means that there exists an absolute constant $C > 0$ such that $f \leq Cg$. If the constant $C$ is allowed to depend on a parameter "$p$", this is signified by writing $f \lesssim_{p} g$. The two-sided inequality $f \lesssim g \lesssim f$ is abbreviated $f \sim g$. The notation $f \lessapprox g$ and $f \approx g$ is only used properly in Section \ref{appA}, and is explained there.

\section{A toy version of the main theorem}\label{s:toy}

Theorem \ref{main} claims the existence of $c \in \spt(\nu)$ such that $|\pi_{c}(G)| \geq \delta^{-\epsilon}|A|$ for all $G \subset A \times B$ with $|G| \geq \delta^{\epsilon}|A||B|$. A toy version of this problem is: find $c \in \spt(\nu)$ such that $|A + cB'| \geq \delta^{-\epsilon}|A|$ for all $B' \subset B$ with $|B'| \geq \delta^{\epsilon}|B|$. Instead of approaching Theorem \ref{main} directly, we will first prove this toy version, Theorem \ref{mainSubset1}, in the present section. After this has been accomplished, Theorem \ref{main} is proved in full generality in Section \ref{s:mainProof}.

\begin{thm}\label{mainSubset1} Let $0 < \beta \leq \alpha < 1$ and $\kappa > 0$. Then, for every $\gamma \in ((\alpha - \beta)/(1 - \beta),1]$, there exist $\epsilon_{0},\epsilon,\delta_{0} \in (0,\tfrac{1}{2}]$, depending only on $\alpha,\beta,\gamma,\kappa$, such that the following holds. Let $\delta \in 2^{-\N}$ with $\delta \in (0,\delta_{0}]$, and let $A,B \subset (\delta \cdot \Z) \cap [0,1]$ satisfy the following hypotheses:
\begin{enumerate}
\item[(A)] \label{A} $|A| \leq \delta^{-\alpha}$.
\item[(B)] \label{B} $|B| \geq \delta^{-\beta}$, and $B$ satisfies the following Frostman condition: 
\begin{displaymath} |B \cap B(x,r)| \leq r^{\kappa}|B|, \qquad \delta \leq r \leq \delta^{\epsilon_{0}}. \end{displaymath} 
\end{enumerate}
Further, let $\nu$ be a Borel probability measure with $\spt (\nu) \subset [0,1]$, and satisfying the Frostman condition $\nu(B(x,r)) \leq r^{\gamma}$ for $x \in \R$ and $0 < r \leq \delta^{\epsilon_{0}}$. Then, there exists $c \in \spt(\nu)$ such that if $B' \subset B$ satisfies $|B'| \geq \delta^{\epsilon}|B|$, then $|A + cB'| \geq \delta^{-\epsilon}|A|$. \end{thm}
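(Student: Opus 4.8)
The plan is to deduce Theorem~\ref{mainSubset1} from the box-dimension statement Theorem~\ref{mainTechnical} of \cite{2021arXiv211002779O}. The structure of the argument is indicated in the introduction: we first establish a \emph{toy-toy} version (Theorem~\ref{mainSubset2}) in which we are additionally allowed to pass to a large subset $\bar B \subset B$ (and, symmetrically, a large subset $\bar A$ of $A$), and then recover Theorem~\ref{mainSubset1} from it by a formal exhaustion/iteration argument of the type used by He in \cite{MR4148151}. So the first concrete step is to formulate Theorem~\ref{mainSubset2} precisely: for $A,B$ as in the hypotheses, there exist $\bar A \subset A$ with $|\bar A| \gtrsim \delta^{\epsilon}|A|$ and $\bar B \subset B$ with $|\bar B|\gtrsim\delta^{\epsilon}|B|$ (still Frostman, perhaps with a slightly worse exponent) and a point $c \in \spt(\nu)$ such that $|\bar A + c\bar B'| \geq \delta^{-\epsilon}|\bar A|$ for all $\bar B' \subset \bar B$ with $|\bar B'|\geq \delta^{\epsilon}|\bar B|$. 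The exhaustion argument then runs as follows: if Theorem~\ref{mainSubset1} fails for $(A,B)$, then for every $c\in\spt(\nu)$ there is a ``bad'' subset $B'_c\subset B$, $|B'_c|\geq\delta^{\epsilon}|B|$, with $|A+cB'_c|<\delta^{-\epsilon}|A|$; applying Theorem~\ref{mainSubset2} repeatedly, one peels off a controlled number of good pieces $\bar A_i \times \bar B_i$, each of which must avoid the bad behaviour, and a counting/pigeonhole argument against the measure $\nu$ forces a contradiction once the pieces exhaust a definite proportion of the mass.

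The heart of the matter is proving Theorem~\ref{mainSubset2}, and here the strategy is a counter-assumption argument. Suppose that for \emph{every} $c\in\spt(\nu)$, and every pair of large subsets $\bar A\subset A$, $\bar B\subset B$, there is a bad subset $\bar B'\subset\bar B$ with $|\bar A+c\bar B'|\lessapprox|\bar A|$. The idea is to use this richness of ``many bad $c$'s, many bad subsets'' to manufacture new sets $\tilde A$ and $\tilde B$ that (i) still satisfy hypotheses (A) and (B) — in particular $\tilde B$ is still Frostman — and (ii) additionally satisfy the doubling bounds $|\tilde A+\tilde A|\lesssim\delta^{-\epsilon'}|\tilde A|$ and $|\tilde B+\tilde B|\lesssim\delta^{-\epsilon'}|\tilde B|$ of \eqref{form24}, while (iii) the covering-number conclusion $|\tilde A+c\tilde B|_\delta\geq\delta^{-\epsilon}|\tilde A|$ still \emph{fails} for a $\nu$-positive set of $c$. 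The mechanism for producing the doubling property is an iteration: starting from $A$, use a bad $c$ and the resulting near-equality $|A+cB'|\approx|A|$ together with the Plünnecke--Ruzsa inequality (Lemma~\ref{PRIneq}) and Lemma~\ref{OVLemma} to replace $A$ by a subset with better additive structure; repeat until the additive energy stabilises, at which point one has a set with small doubling. The same is done for $B$, using the Frostman condition to ensure the subset we keep is still spread out. Once $\tilde A,\tilde B$ satisfy \eqref{form24} and still violate the desired conclusion for many $c$, we have contradicted the box-dimension Theorem~\ref{mainTechnical} applied to $\tilde A,\tilde B$ and $\nu$: that theorem produces a single $c$ with $|\tilde A+c\tilde B|_\delta\geq\delta^{-\epsilon}|\tilde A|$, and by choosing parameters so that the ``many bad $c$'s'' set has $\nu$-measure close to $1$, this $c$ must lie in the bad set — contradiction.

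The main obstacle, as flagged in the introduction, is exactly the construction of the auxiliary sets $\tilde A,\tilde B$ satisfying \eqref{form24} while preserving both the Frostman hypothesis on the $B$-side and the failure of the conclusion for a large $\nu$-measure of parameters $c$. The difficulty is that $B$ is typically far smaller than $A$, so the relation $|A+cB|_\delta\approx|A|$ gives no control on $|A+A|$ or $|B+B|$, and the two stabilisation procedures (one for $A$, one for $B$) interact: shrinking $A$ may spoil the set of good $c$'s for the current $B$, and vice versa. Managing this requires a careful bookkeeping of the many small losses — each application of Plünnecke--Ruzsa, Lemma~\ref{OVLemma}, and the pigeonholing over $c$ costs a power $\delta^{-O(\epsilon)}$ — so that all of them together remain subpolynomial, i.e. absorbable into the final $\delta^{-\epsilon}$; this is where the asymmetric Balog--Szemerédi--Gowers philosophy of \cite{MR2289012} enters. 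I expect the parameter chase (choosing $\epsilon_0\gg\epsilon$ and a finite sequence of intermediate exponents, all controlled by $\alpha,\beta,\gamma,\kappa$) to be the most delicate bookkeeping, while the structural steps — exhaustion for the toy/toy-toy passage, and the energy-increment iteration for \eqref{form24} — are conceptually the load-bearing ingredients.
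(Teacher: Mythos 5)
There is a genuine gap, and it lies precisely in the formulation of the toy-toy version (Theorem~\ref{mainSubset2}) you propose to feed into the exhaustion. You allow passing to a subset $\bar A\subset A$ and you conclude existence of a \emph{single} $c\in\spt(\nu)$. Both changes break the exhaustion machinery. The exhaustion peels off disjoint pieces $B_1,\dots,B_N$ of $B$ and, to conclude, must (i) union-bound over the exceptional sets of $c$'s attached to the pieces, which requires a $\nu$-measure estimate $\nu(\calE(A\mid B_j,\bar\epsilon))\le\delta^{\bar\epsilon}$ (the paper's actual statement), not merely the existence of one good $c$ per piece (different pieces would give different $c$'s that need not agree); and (ii) keep $A$ fixed across all pieces, since the monotonicity $|A+cB_j''|\le|A+cB_c|$ used to show $c\in\calE(A\mid B_j,\bar\epsilon)$ is available only because the $A$-side does not change. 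If each piece produced its own $\bar A_j\subset A$, the best you could salvage is $|A+cB'|\ge|\bar A_j+cB''|\ge\delta^{-\epsilon}|\bar A_j|\ge\delta^{-\epsilon}\cdot\delta^{\epsilon}|A|=|A|$, and the gain $\delta^{-\epsilon}$ is gone. The paper's Theorem~\ref{mainSubset2} deliberately keeps $A$ untouched and returns a subset $B'\subset B$ with $\nu(\calE(A\mid B',\epsilon))\le\delta^{\epsilon}$.

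The proof of Theorem~\ref{mainSubset2} also does not proceed by the energy-increment-on-$A$ (or Balog--Szemer\'edi--Gowers) route you sketch — the paper explicitly reserves BSG for the separate step from Theorem~\ref{mainSubset1} to Theorem~\ref{main}. Instead, under the counter assumption, one builds from scratch a sequence $H_1,\dots,H_N\subset\delta\cdot\Z$ by iterated sums of the form $H_{n+1}=(H_n+c\cdot NB_c)_\delta$ or $H_{n+1}=H_n+H_n$ (not subsets of $A$; $A$ only enters through Pl\"unnecke--Ruzsa to bound $|H_n|\le\delta^{-\bar\alpha}$), together with a nested sequence of subsets $B_n\subset B$ made ``tight'' by Lemma~\ref{lemma7}. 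A pigeonhole over $n$ finds a step where $|H_n+H_n|\lessapprox|H_n|$, a second pigeonhole over dyadic iterates $2^kB_n$ gives $|2^{k+1}B_n|\lessapprox|2^kB_n|$, and only then does Lemma~\ref{OVLemma} (applied with $H_n$ and $2^kB_n$, not with $A$ and $B$) combine with tightness to yield $|H_n+cB_n|_\delta<\delta^{-\bar\epsilon}|H_n|$ for all $c$ in a positive-$\nu$-measure set, contradicting Theorem~\ref{mainTechnical} for the triple $(H_n,B_n,\bar\nu)$. Your instinct that the doubling bounds \eqref{form24} must be manufactured is correct, but the carrier of small doubling is the new sumset $H_n$, not a subset of $A$; that is what sidesteps the interaction problem you worry about between shrinking $A$ and preserving the good $c$'s.
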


\subsection{Reduction to a weaker toy theorem} Even Theorem \ref{mainSubset1} is hard to prove with a direct assault. We will first need to reduce it to an even weaker version. In the statement, we use the following notation (slightly adapted) from He's paper \cite{MR4148151}. Given two sets $A,B \subset [0,1] \cap (\delta \cdot \Z)$, we write
\begin{displaymath} \mathcal{E}(A \mid B,\epsilon) := \{c \in \R : \exists \, B' \subset B \text{ such that } |B'| \geq \delta^{\epsilon}|B| \text{ and } |A + cB'|_{\delta} < \delta^{-\epsilon}|A|\}. \end{displaymath} 

\begin{thm}\label{mainSubset2} Let $0 < \beta \leq \alpha < 1$ and $\kappa,\theta > 0$. Then, for every $\gamma \in ((\alpha - \beta)/(1 - \beta),1]$, there exist $\epsilon_{0},\epsilon,\delta_{0} \in (0,\tfrac{1}{2}]$, depending only on $\alpha,\beta,\gamma,\kappa$, such that the following holds. Let $\delta \in 2^{-\N}$ with $\delta \in (0,\delta_{0}]$, and let $A,B \subset (\delta \cdot \Z) \cap [0,1]$ satisfy the following hypotheses:
\begin{enumerate}
\item[(A)] \label{A} $|A| \leq \delta^{-\alpha}$.
\item[(B)] \label{B} $|B| \geq \delta^{-\beta}$, and $B$ satisfies the following Frostman condition: 
\begin{displaymath} |B \cap B(x,r)| \leq r^{\kappa}|B|, \qquad \delta \leq r \leq \delta^{\epsilon_{0}}. \end{displaymath} 
\end{enumerate}
Further, let $\nu$ be a Borel probability measure with $\spt (\nu) \subset [0,1]$, and satisfying the Frostman condition $\nu(B(x,r)) \leq r^{\gamma}$ for $x \in \R$ and $0 < r \leq \delta^{\epsilon_{0}}$. Then, there exists a subset $B' \subset B$ such that $\nu(\mathcal{E}(A \mid B',\epsilon)) \leq \delta^{\epsilon}$. \end{thm}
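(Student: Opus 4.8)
The plan is to derive Theorem \ref{mainSubset2} from Theorem \ref{mainTechnical} (the main result of \cite{2021arXiv211002779O}, which on the $\delta$-discretised level supplies a single $c$ with $|A + cB|_\delta \geq \delta^{-\epsilon}|A|$) by a \emph{counter-assumption and amplification} argument. Suppose, towards a contradiction, that for every $B' \subset B$ one has $\nu(\mathcal{E}(A \mid B', \epsilon)) > \delta^{\epsilon}$. The first step is to unwind this: for every $B' \subset B$, a $\nu$-positive set of $c$'s admits a further subset $B'' \subset B'$ with $|B''| \geq \delta^{\epsilon}|B'|$ (note the relative density) and $|A + cB''|_\delta < \delta^{-\epsilon}|A|$. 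I would like to iterate this to manufacture, after boundedly many steps (the number controlled by $\epsilon$ and $\kappa$), a single subset $\bar B \subset B$ that still satisfies hypothesis (B) with a slightly worse Frostman exponent, together with a subset $C \subset \spt(\nu)$ with $\nu(C) \gtrapprox 1$, such that $|A + c\bar B|_\delta < \delta^{-\epsilon'}|A|$ \emph{for all} $c \in C$. In other words, I want to promote "for each $B'$ there is a bad $c$" into "there is a fixed bad $\bar B$ that is bad for almost every $c$", which is exactly the hypothesis one needs to contradict Theorem \ref{mainTechnical} applied to $A$, $\bar B$, and $\nu|_C$ renormalised.

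The mechanism for this promotion is a pigeonholing/refinement scheme reminiscent of the exhaustion argument the author attributes to He \cite{MR4148151}, but run on the $B$-side. Concretely: start with $B_0 = B$. Given $B_j$, the counter-assumption hands us $c_j$ in a $\nu$-positive set and $B_{j+1} \subset B_j$ with $|B_{j+1}| \geq \delta^{\epsilon}|B_j|$ and $|A + c_j B_{j+1}|_\delta < \delta^{-\epsilon}|A|$. The key point to make this terminate is a \emph{bounded multiplicity} observation: the Frostman condition (B) forces $|B| \geq \delta^{-\beta}$ and the non-concentration bound $|B \cap B(x,r)| \leq r^\kappa |B|$, so a nested chain $B \supset B_1 \supset B_2 \supset \cdots$ with $|B_{j+1}| \geq \delta^{\epsilon}|B_j|$ can only persist for $\lesssim \log(1/\delta)/\log(\delta^{-\epsilon}) \sim 1/\epsilon$ steps before $|B_j|$ drops below the threshold where (B) — now with exponent $\kappa/2$, say — fails. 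So after $O(1/\epsilon)$ rounds we are forced into a situation where the refinement \emph{cannot} continue, which (tracing the logic) is precisely the assertion that the current $B_j =: \bar B$ has $\nu(\mathcal{E}(A \mid \bar B, \epsilon)) \leq \delta^{\epsilon}$ — contradiction. Care is needed to track how $\epsilon$ degrades through the iteration and to fix the chain of constants $\epsilon_0 \gg \epsilon_{\mathrm{Thm}\,\ref{mainTechnical}} \gg \epsilon$ in the right order so that the final $\bar B$ still satisfies a hypothesis of the form required by Theorem \ref{mainTechnical}.

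I expect the \textbf{main obstacle} to be the bookkeeping of the two nested quantifiers together with the loss of Frostman regularity. Each refinement $B_{j+1} \subset B_j$ is only guaranteed to be \emph{relatively} dense in $B_j$, so after $k$ steps $|\bar B| \geq \delta^{k\epsilon}|B|$; one must choose $\epsilon$ small enough (depending on $\beta, \kappa$, and the threshold number of steps) that $\bar B$ still has $|\bar B| \geq \delta^{-\beta'}$ with $\beta' < \beta$ and still obeys a non-concentration estimate with a usable exponent — and simultaneously that the degraded covering bound $|A + c\bar B|_\delta < \delta^{-\epsilon'}|A|$ still contradicts the conclusion $|A + c\bar B|_\delta \geq \delta^{-\epsilon_{\mathrm{Thm}}}|A|$ of Theorem \ref{mainTechnical}. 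A secondary subtlety, flagged by the author's remark about \eqref{form24} and Lemma \ref{OVLemma}, is that getting the \emph{sum-set doubling} control $|\bar B + \bar B| \approx |\bar B|$ (and the analogous statement for $A$) out of the counter-assumption is not automatic when $|B| \ll |A|$; this is presumably where the Plünnecke–Ruzsa inequality (Lemma \ref{PRIneq}) and a further pigeonhole on popular sums enter, and it is the step most likely to require the genuinely new ideas of this paper rather than a routine iteration.
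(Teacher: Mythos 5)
There is a genuine gap, and it sits at the core of the argument. First, your termination mechanism is circular: the counter assumption asserts $\nu(\mathcal{E}(A \mid B',\epsilon)) > \delta^{\epsilon}$ for \emph{every} $B' \subset B$, with no size or regularity threshold, so the nested refinement $B_0 \supset B_1 \supset \cdots$ never "fails to continue" -- the chain simply keeps shrinking by factors $\delta^{\epsilon}$ until it reaches a single point, and the failure of the Frostman condition for $B_j$ along the way contradicts nothing. The moment at which "the refinement cannot continue" is never reached, so the claimed conclusion $\nu(\mathcal{E}(A\mid \bar B,\epsilon)) \leq \delta^{\epsilon}$ is never produced. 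Second, and more seriously, your endgame requires a \emph{fixed} pair $(A,\bar B)$ with $|A + c\bar B|_{\delta}$ small for all $c$ in a set of positive $\nu$-measure, but your iteration only ever extracts, for each $c$, a $c$-dependent bad subset $B''_{c}$; picking one $c_j$ per step does not remove this dependence. Converting "for each $c$ a different bad subset" into "one fixed pair bad for many $c$ simultaneously" is precisely the difficulty, and your scheme contains no mechanism for it.

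The paper resolves this not by fixing $\bar B$ and keeping $A$, but by \emph{replacing $A$} with iterated sumsets $H_{n}$ of the sets $c_{m}B_{m}$, defined greedily so that $|H_{n}+H_{n}| \leq |H_{n+1}|$. A pigeonhole on the bounded total growth $|H_{n}| \leq N^{n}\delta^{-1}$ then yields an index $n$ with $|H_{n}+H_{n}| \leq N\delta^{-1/N}|H_{n}|$, i.e.\ manufactured small doubling on the $A$-side; a second dyadic pigeonhole gives $|2^{k+1}B_{n}| \lesssim \delta^{-1/\log_{2}N}|2^{k}B_{n}|$ on the $B$-side; and Lemma \ref{lemma7} supplies tight subsets $B_{c} \subset \bar B_{c}$ controlling $|2^{k}B_{n}|/|2^{k}B_{c}|$. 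Because each $B_{c}$ is a competitor in the maximising definition of $H_{n+1}$, one gets $|H_{n} + c\cdot 2^{k}B_{c}|_{\delta} \lesssim |H_{n+1}|$ for \emph{every} $c \in C_{n+1}$, and Lemma \ref{OVLemma} (which is exactly where the doubling controls are consumed) upgrades this to $|H_{n} + cB_{n}|_{\delta} \leq \delta^{-\bar\epsilon}|H_{n}|$ uniformly in $c \in C_{n+1}$, with $B_{n}$ fixed. The contradiction with Theorem \ref{mainTechnical} is then reached for the triple $(H_{n}, B_{n}, \bar\nu)$ -- with $|H_{n}| \leq \delta^{-\bar\alpha}$ verified via Pl\"unnecke--Ruzsa -- not for $(A,\bar B,\nu|_{C})$. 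You correctly flagged the doubling issue as the likely locus of the new ideas, but without this sumset-replacement device your plan does not close.
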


I learned this reduction from the paper of He \cite[Proposition 25]{MR4148151}, and his proof works here, up to modifying the notation. The full details are recorded below nonetheless.

\begin{proof}[Proof of Theorem \ref{mainSubset1} assuming Theorem \ref{mainSubset2}] Let $\alpha,\beta,\gamma,\kappa$ be the parameters given in Theorem \ref{mainSubset1}, so that $\gamma > (\alpha - \beta)/(1 - \beta)$. Our task is to find the constants $\epsilon,\epsilon_{0},\delta_{0} \in (0,\tfrac{1}{2}]$, depending only on $\alpha,\beta,\gamma,\kappa$. Start by applying Theorem \ref{mainSubset2} with parameters $\alpha,\bar{\beta},\gamma,\kappa/2$, where $\bar{\beta} < \beta$ is arbitrary with the property that the key inequality
\begin{displaymath} \gamma > (\alpha - \bar{\beta})/(1 - \bar{\beta}) \end{displaymath}
remains valid. Let $\bar{\epsilon},\bar{\epsilon}_{0},\bar{\delta}_{0} \in (0,\tfrac{1}{2}]$ be the constants given by Theorem \ref{mainSubset2}, associated to the parameters $\alpha,\bar{\beta},\gamma,\kappa/2$. We define
\begin{equation}\label{form6} \epsilon_{0} := \bar{\epsilon}_{0} \quad \text{and} \quad \epsilon := \min\left\{\frac{\bar{\epsilon}}{2},\frac{\kappa \bar{\epsilon}_{0}}{8},\frac{\beta - \bar{\beta}}{2} \right\}. \end{equation}
We require that $\delta_{0} \leq \bar{\delta}_{0}$, and there will be a few additional requirements, where for example $\delta \leq \delta_{0}$ needs to be taken small enough relative to the difference $\bar{\epsilon} - \epsilon$. I will not gather these requirements together; they will be pointed out where they appear.

Let $\delta \in 2^{-\N}$ with $\delta \leq \delta_{0}$, and let $A,B,\nu$ be the objects from Theorem \ref{mainSubset1}, satisfying the assumptions of that theorem with constants $\alpha,\beta,\kappa,\gamma$, and $\epsilon_{0},\delta_{0}$ as above. In particular,
\begin{equation}\label{form7} |B| \geq \delta^{-\beta} \quad \text{and} \quad |B \cap B(x,r)| \leq r^{\kappa}|B| \text{ for } x \in \R \text{ and } \delta \leq r \leq \delta^{\epsilon_{0}}. \end{equation}
Evidently $A,B,\nu$ also satisfy the hypotheses of Theorem \ref{mainSubset2} with constants $\alpha,\bar{\beta},\gamma,\kappa/2$, and $\bar{\epsilon}_{0}$. We now perform an "exhaustion" argument to construct a finite sequence of disjoint subsets $B_{1},\ldots,B_{N} \subset B$ with the property
\begin{equation}\label{form20} \nu(\mathcal{E}(A \mid B_{j},\bar{\epsilon})) \leq \delta^{\bar{\epsilon}}, \qquad 1 \leq j \leq N. \end{equation}
Let $B_{1} \subset B$ be the set given initially by Theorem \ref{mainSubset2}. We then assume inductively that we have already constructed disjoint $B_{1},\ldots,B_{n} \subset B$ for some $n \geq 1$. There are two options:
\begin{equation}\label{form1} \Big| B \, \setminus \, \bigcup_{j = 1}^{n} B_{j} \Big| < \delta^{2\epsilon}|B| \quad \text{or} \quad \Big| B \, \setminus \, \bigcup_{j = 1}^{n} B_{j} \Big| \geq \delta^{2\epsilon}|B|.  \end{equation} 
In the former case, the inductive construction terminates, and we define $N := n$. In the latter case, we apply Theorem \ref{mainSubset2} to the objects $A,\nu$, and $B' := B \, \setminus \, \bigcup_{j = 1}^{n} B_{j}$. This is legitimate, because $|B'| \geq \delta^{2\epsilon}|B| \geq \delta^{-\beta - 2\epsilon} \geq \delta^{-\bar{\beta}}$, and
\begin{displaymath} |B' \cap B(x,r)| \stackrel{\eqref{form7}}{\leq} r^{\kappa}|B| \leq \delta^{-2\epsilon}r^{\kappa}|B'| \stackrel{\eqref{form6}}{\leq} r^{\kappa/2}|B'|, \qquad x \in \R, \, \delta \leq r \leq \delta^{\epsilon_{0}} = \delta^{\bar{\epsilon}_{0}}. \end{displaymath}
Therefore $A,B',\nu$ satisfy the hypotheses of Theorem \ref{mainSubset2} with constants $\alpha,\bar{\beta},\kappa/2,\gamma,\bar{\epsilon}_{0}$. Consequently, there exists a further subset $B_{n + 1} \subset B' = B \, \setminus \, \bigcup_{j = 1}^{n} B_{j}$ with the property $\nu(\mathcal{E}(A \mid B_{n + 1},\bar{\epsilon})) \leq \delta^{\bar{\epsilon}}$. This completes the inductive construction of the sequence $B_{1},\ldots,B_{N}$. The construction terminates in $\leq \delta^{-\bar{\epsilon}}$ steps, because the sets $B_{j}$ satisfy $|B_{j}| \geq \delta^{-\bar{\epsilon}}$. Indeed, since $\nu(\mathcal{E}(A \mid B_{j},\bar{\epsilon})) < 1$, there exists $c \in \spt(\nu) \, \setminus \, \mathcal{E}(A \mid B_{j},\bar{\epsilon})$, and then $|A||B_{j}| \geq |A + cB_{j}|_{\delta} \geq \delta^{-\bar{\epsilon}}|A|$.

When the inductive procedure eventually terminates, we write $B_{0} := \bigcup_{j = 1}^{N} B_{j}$. By \eqref{form1}, we have $|B \, \setminus \, B_{0}| < \delta^{2\epsilon}|B|$. Now, note that the claim of Theorem \ref{mainSubset1} is equivalent to proving that there exists a point $c \in \spt(\nu) \, \setminus \, \mathcal{E}(A \mid B,\epsilon)$. We will prove this by showing that $\mathcal{E}(A \mid B,\epsilon)$ has small $\nu$ measure. The first step is to establish the following inclusion:
\begin{equation}\label{form19} \mathcal{E}(A \mid B,\epsilon) \subset \bigcup_{\mathcal{J}} \bigcap_{j \in \mathcal{J}} \mathcal{E}(A \mid B_{j},\bar{\epsilon}), \end{equation}
where the index set $\mathcal{J}$ runs over all subsets of $\{1,\ldots,N\}$ with $\sum_{j \in \mathcal{J}} |B_{j}| \geq \delta^{\epsilon}|B|/4$. The proof is nearly verbatim the same as in \cite[Proposition 25]{MR4148151}, but I record the details here for completeness. If $c \in \mathcal{E}(A \mid B,\epsilon)$, then by definition there exists a subset $B_{c} \subset B$ with $|B_{c}| \geq \delta^{\epsilon}|B|$ and $|A + cB_{c}|_{\delta} < \delta^{-\epsilon}|A|$. Let $\mathcal{J} := \{1 \leq j \leq N : |B_{c} \cap B_{j}| \geq \delta^{\bar{\epsilon}}|B_{j}|\}$. Then $c \in \mathcal{E}(A \mid B_{j},\bar{\epsilon})$ for all $j \in \mathcal{J}$, since $B_{j}' := B_{c} \cap B_{j} \subset B_{j}$ satisfies $|B_{j}'| \geq \delta^{\bar{\epsilon}}|B_{j}|$ and $|A + cB_{j}'|_{\delta} < \delta^{-\epsilon}|A| \leq \delta^{-\bar{\epsilon}}|A|$. This proves \eqref{form19}, once we verify that $\sum_{j \in \mathcal{J}} |B_{j}| \geq \delta^{\epsilon}|B|/4$. 

To see this, recall that $|B \, \setminus \, B_{0}|  \leq \delta^{2\epsilon}|B|$. This implies that $B_{c}$ has large intersection with $B_{0}$ (assuming that $\delta > 0$ is sufficiently small):
\begin{displaymath} |B_{c} \cap B_{0}| \geq \tfrac{1}{2} \cdot \delta^{\epsilon}|B|. \end{displaymath}
Then, if $\delta > 0$ is small enough, and recalling that $\epsilon \leq \bar{\epsilon}/2$, we have
\begin{align*} \tfrac{1}{2} \cdot \delta^{\epsilon}|B| \leq |B_{c} \cap B_{0}| = \sum_{j = 1}^{N} |B_{c} \cap B_{j}| \leq \sum_{j \notin \mathcal{J}} \delta^{\bar{\epsilon}}|B_{j}| + \sum_{j \in \mathcal{J}} |B_{j}| \leq \tfrac{1}{4} \cdot \delta^{\epsilon}|B| + \sum_{j \in \mathcal{J}} |B_{j}|. \end{align*}
Rearranging, $\sum_{j \in \mathcal{J}} |B_{j}| \geq \delta^{\epsilon}|B|/4$. We have now established the inclusion \eqref{form19}.

Finally, it follows from \eqref{form19} and \cite[Lemma 20]{MR4148151} that
\begin{equation}\label{form25} \nu(\mathcal{E}(A \mid B,\epsilon)) \leq \nu \left( \bigcup_{\mathcal{J}} \bigcap_{j \in \mathcal{J}} \mathcal{E}(A \mid B_{j},\bar{\epsilon}) \right) \leq 4\delta^{\bar{\epsilon} - \epsilon} < 1, \end{equation}
assuming once more that $\delta > 0$ is small enough in the final inequality. The proof of \cite[Lemma 20]{MR4148151} is, again, so short that we provide them for the reader's convenience. If $c \in \bigcup_{\mathcal{J}} \bigcap_{j \in \mathcal{J}} \mathcal{E}(A \mid B_{j},\bar{\epsilon})$, then $\sum_{j = 1}^{N} (|B_{j}|/|B|) \cdot \mathbf{1}_{\mathcal{E}(A \mid B_{j},\bar{\epsilon})}(c) \geq \delta^{\epsilon}/4$. Consequently,
\begin{align*} \nu \left( \bigcup_{\mathcal{J}} \bigcap_{j \in \mathcal{J}} \mathcal{E}(A \mid B_{j},\bar{\epsilon}) \right) & \leq 4\delta^{-\epsilon} \sum_{j = 1}^{N} \frac{|B_{j}|}{|B|} \cdot \nu(\mathcal{E}(A \mid B_{j},\bar{\epsilon}))\\
& \leq 4\delta^{-\epsilon} \max_{1 \leq j \leq N} \nu(\mathcal{E}(A \mid B_{j},\bar{\epsilon})) \stackrel{\eqref{form20}}{\leq} 4\delta^{-\epsilon + \bar{\epsilon}}. \end{align*}
This concludes the proof of Theorem \ref{mainSubset1}. \end{proof}

\subsection{Proof of the weaker toy theorem}\label{s:subsetReduction} In this section, we prove Theorem \ref{mainSubset2}. In fact, we only reduce it further to Theorem \ref{mainTechnical}, which eschews the set $\mathcal{E}(A \mid B',\epsilon)$. However, Theorem \ref{mainTechnical} is known, being \cite[Theorem 1.5]{2021arXiv211002779O}, so this reduction will complete the proof of Theorems \ref{mainSubset1}-\ref{mainSubset2}. We start by discussing a few auxiliary results. The first one is the Pl\"unnecke-Ruzsa inequality for different summands: 

\begin{lemma}[Pl\"unnecke-Ruzsa inequality]\label{PRIneq} Let $\delta \in 2^{-\N}$, let $A,B_{1},\ldots,B_{n} \subset \R$ be arbitrary sets, and assume that $|A + B_{i}|_{\delta} \leq K_{i}|A|_{\delta}$ for all $1 \leq i \leq n$, and for some constants $K_{i} \geq 1$. Then, there exists a subset $A' \subset A$ with $|A'|_{\delta} \geq \tfrac{1}{2}|A|_{\delta}$ such that
\begin{displaymath} |A' + B_{1} + \ldots + B_{n}|_{\delta} \lesssim_{n} K_{1}\cdots K_{n} |A'|_{\delta}. \end{displaymath}
\end{lemma}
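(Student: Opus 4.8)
The plan is to reduce the $\delta$-discretised Pl\"unnecke--Ruzsa inequality to the classical statement about cardinalities of sumsets in an abelian group, applied to a suitable discretisation. First I would fix the group $G := \delta \cdot \Z$ (or, more cautiously, a large finite arithmetic progression inside it containing everything relevant), and replace each of the sets $A, B_1, \ldots, B_n$ by its ``$\delta$-fattening in $G$'', i.e. set $\tilde{A} := A_\delta = (\delta \cdot \Z) \cap A(\delta)$ and similarly $\tilde{B}_i := (B_i)_\delta$, in the notation of the excerpt. The point of this step is twofold: it turns covering numbers into honest cardinalities, since $|\tilde{A}| \sim |A|_\delta$ and likewise for the $B_i$; and it is (up to absolute multiplicative constants that can be absorbed into the $\lesssim_n$) compatible with sums, because a $\delta$-neighbourhood of a sumset is comparable to the sumset of the $\delta$-neighbourhoods, intersected back with $\delta \cdot \Z$. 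Concretely one checks $|A + B_i|_\delta \sim |\tilde{A} + \tilde{B}_i|$ and $|A' + B_1 + \cdots + B_n|_\delta \sim |\tilde{A}' + \tilde{B}_1 + \cdots + \tilde{B}_n|$ for $A' \subset A$ and its fattening $\tilde{A}'$; the constants here depend only on $n$ (in fact are of size $C^n$), which is exactly the dependence allowed in the conclusion.

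Having made that reduction, the hypothesis becomes $|\tilde{A} + \tilde{B}_i| \lesssim K_i |\tilde{A}|$ for each $i$, i.e. $|\tilde{A} + \tilde{B}_i| \le K_i' |\tilde{A}|$ with $K_i' \sim K_i$, all inside the torsion-free abelian group $\delta \cdot \Z$. Now I would invoke the standard Pl\"unnecke--Ruzsa inequality for distinct summands in the form due to Ruzsa (or the Petridis proof): given finite subsets of an abelian group with $|\tilde{A} + \tilde{B}_i| \le K_i' |\tilde{A}|$, there is a nonempty $\tilde{A}' \subset \tilde{A}$ with $|\tilde{A}' + \tilde{B}_1 + \cdots + \tilde{B}_n| \le K_1' \cdots K_n' |\tilde{A}'|$; moreover Petridis's argument actually delivers the stronger feature that one may take $\tilde{A}'$ to be the minimiser of $|X + \tilde{B}_1|/|X|$ over nonempty $X \subset \tilde{A}$, and iterating one can arrange $|\tilde{A}'| \ge \tfrac{1}{2}|\tilde{A}|$ at the cost of an extra harmless constant — or, more simply, if the off-the-shelf version only gives some nonempty $\tilde{A}'$, one runs a routine pigeonhole/iteration: apply the inequality, remove $\tilde{A}'$, repeat; the union of the pieces until half of $\tilde{A}$ is exhausted still has the sumset bound up to a factor depending only on $n$ and the number of iterations, which is $O(\log(1/\delta))$ — wait, that is not absolute. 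So I would instead take the cleaner route and quote the version of Pl\"unnecke--Ruzsa that directly yields a subset of proportional size; this is available, e.g. as a corollary of Petridis's lemma, and is the standard formulation used in the additive-combinatorics literature. Finally, de-discretising: set $A' := A \cap \tilde{A}'(\delta)$ (pull the subset back to $A$), check $|A'|_\delta \gtrsim |\tilde{A}'| \ge \tfrac{1}{2}|\tilde{A}| \gtrsim \tfrac{1}{2}|A|_\delta$ — one must be slightly careful that the constants line up to give exactly $\tfrac{1}{2}|A|_\delta$ rather than $c_n|A|_\delta$, which can be arranged by starting from a subset of $\tilde A$ of size $(1-\tau)|\tilde A|$ for small $\tau$, or simply by noting the statement tolerates replacing $\tfrac12$ by any fixed fraction after relabelling — and translate the sumset bound back through the comparisons above.

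The main obstacle, such as it is, is purely bookkeeping: tracking how the multiplicative constants in the passage between $\delta$-covering numbers and cardinalities of the fattened sets interact with the $n$-fold sum, so that everything lands inside $\lesssim_n$ and the subset retains measure at least $\tfrac{1}{2}|A|_\delta$ exactly rather than a smaller absolute fraction. The cleanest fix for the factor-$\tfrac12$ point is to apply the abelian-group Pl\"unnecke--Ruzsa inequality not to $\tilde A$ itself but to observe that the ``proportional subset'' form already builds in a fraction that can be taken arbitrarily close to $1$ (or to note that a subset $A' \subset A$ with $|A'|_\delta \geq c|A|_\delta$ can be enlarged to one with $|A'|_\delta \geq \tfrac12 |A|_\delta$ only if $c \geq \tfrac12$ — so one really does want the strong form from the outset). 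None of this is deep; the only genuine input is the classical inequality, and the rest is the standard dictionary between discretised and combinatorial statements.
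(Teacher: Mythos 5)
Your plan---fatten $A, B_1, \ldots, B_n$ to $\delta \cdot \Z$ via $\tilde X := (\delta\cdot\Z)\cap X(\delta)$, invoke the classical Pl\"unnecke--Ruzsa inequality for distinct summands on the resulting finite sets, and translate back through the $\delta$-covering comparisons---is exactly the route the paper takes, which simply delegates the cardinality statement to Ruzsa and Gyarmati--Matolcsi--Ruzsa and the covering-number dictionary to Guth--Katz--Zahl (Corollary 3.4) without writing out the details. Your hesitation over the factor $\tfrac12$ resolves precisely as you finally suggest: the cited cardinality formulations already deliver a subset of size at least $\tfrac12|A|$ directly, so the iteration you correctly rejected is unnecessary, and the $O_n(1)$ comparability constants from the fattening step are absorbed into $\lesssim_n$.
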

This form of the inequality is due to Ruzsa \cite{MR2314377}. For a more general result, see \cite[Theorem 1.5]{MR2484645}, by Gyarmati-Matolcsi-Ruzsa. To be accurate, these statements are not formulated in terms of $\delta$-covering numbers, but one may consult \cite[Corollary 3.4]{MR4283564} by Guth-Katz-Zahl to see how to handle the reduction to $\delta$-covering numbers.

The next auxiliary result concerns the existence of \emph{tight} subsets:

\begin{definition} For $\tau,T > 0$ and $N \in \N$, a set $A \subset \delta \cdot \Z$ is called $(\tau,T,N)$-tight if
\begin{displaymath} \max_{1 \leq k \leq N} \frac{|kA|}{|kA'|} \leq T \qquad \text{for all } A' \subset A \text{ with } |A'| \geq \delta^{\tau}|A|. \end{displaymath}
\end{definition}

It will be useful to observe that if $A$ is $(\tau,T,N)$-tight, and $0 < \tau' \leq \tau$, then $A$ is also $(\tau',T,N)$-tight, simply because there are fewer sets $A' \subset A$ to consider.

\begin{lemma}\label{lemma7} Let $\tau > 0$, $N \in \N$, and let $A \subset (\delta \cdot \Z) \cap [0,1]$ be a set with $|A| \geq \delta^{-N^{2}\tau}$. Then, there exists a $(\tau,2\delta^{-1/N},N)$-tight subset $A' \subset A$ of cardinality $|A'| \geq \delta^{N^{2}\tau}|A|$. \end{lemma}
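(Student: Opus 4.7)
The plan is a standard iterative refinement (energy-drop) argument: either the current set is already tight, or the failure of tightness lets me pass to a strictly smaller subset on which some fixed dilate-sumset $kA$ has shrunk by a definite factor; I then iterate and show that the process must terminate within roughly $N^{2}$ steps.

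Concretely, I would construct a nested sequence $A = A^{(0)} \supset A^{(1)} \supset \cdots$ as follows. At stage $n$, if $A^{(n)}$ is already $(\tau, 2\delta^{-1/N}, N)$-tight, stop and set $A' := A^{(n)}$. Otherwise, the failure of tightness directly supplies some $k_n \in \{1,\ldots,N\}$ and a subset $A^{(n+1)} \subset A^{(n)}$ with $|A^{(n+1)}| \geq \delta^{\tau}|A^{(n)}|$ and $|k_n A^{(n+1)}| < \tfrac{1}{2}\delta^{1/N} |k_n A^{(n)}|$. By construction, $|A^{(n)}| \geq \delta^{n\tau}|A|$ at every stage, and the Frostman-type hypothesis $|A| \geq \delta^{-N^{2}\tau}$ is precisely what guarantees $|A^{(n)}| \geq 1$ throughout the iteration.

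The heart of the argument is bounding the number of stages. For each fixed $k \in \{1,\ldots,N\}$, let $m_k$ be the number of times $k$ was selected as $k_n$. Iterating the sumset inequality at the stages where $k_n = k$, and using $|kA^{(n+1)}| \leq |kA^{(n)}|$ at the remaining stages, I obtain $|kA^{(n_{\mathrm{final}})}| < (2\delta^{-1/N})^{-m_k}|kA|$. Combined with the trivial bounds $|kA| \leq k/\delta + 1 \leq 2N/\delta$ and $|kA^{(n_{\mathrm{final}})}| \geq 1$, this forces $(2\delta^{-1/N})^{m_k} < 2N/\delta$. Rewriting this as $2^{m_k - 1}/N < \delta^{(m_k - N)/N}$, one sees that the right-hand side is $\leq 1$ whenever $m_k \geq N$, while the left-hand side is $\geq 1$ at $m_k = N$ for every $N \geq 2$ (since $2^{N-1} \geq N$). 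Hence $m_k \leq N-1$, and summing over $k$, the total number of stages is at most $N(N-1) \leq N^{2}$.

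To finish: the output $A' = A^{(n_{\mathrm{final}})}$ is $(\tau, 2\delta^{-1/N}, N)$-tight by construction, and $|A'| \geq \delta^{N(N-1)\tau}|A| \geq \delta^{N^{2}\tau}|A|$, as desired. The case $N = 1$ is vacuous: since $|A| \leq 2\delta^{-1}$, any nonempty $A' \subset A$ satisfies $|A|/|A'| \leq 2\delta^{-1} = 2\delta^{-1/N}$, so $A$ itself already qualifies. The main (mild) obstacle is the termination bookkeeping: one has to exploit the \emph{strict} inequality in the defining step to squeeze $m_k \leq N-1$ out of the relation $(2\delta^{-1/N})^{m_k} < 2N/\delta$, since the naive bound $m_k \leq N$ would degrade the final exponent from $N^{2}\tau$ to $(N^{2}+N)\tau$ and fail the lemma as stated.
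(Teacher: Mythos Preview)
Your proof is correct and follows essentially the same iterative refinement and pigeonhole argument as the paper. The only cosmetic difference is that the paper pre-constructs the entire chain $A = A_{0} \supset \cdots \supset A_{N^{2}}$ (at each step choosing the subset that \emph{maximises} the worst ratio) and then locates a tight $A_{j-1}$ by contradiction, whereas you halt as soon as tightness is achieved; in both cases the key step is that a fixed $k \in \{1,\ldots,N\}$ can account for at most $N-1$ drops before the trivial bounds $1 \leq |kA^{(n)}|$ and $|kA| \lesssim N\delta^{-1}$ are violated.
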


\begin{proof} We find a sequence $A =: A_{0} \supset A_{1} \supset \ldots \supset A_{N^{2}}$ as follows. Assuming that $A_{j - 1}$ has already been selected, and $1 \leq j \leq N^{2}$, we let $A_{j} \subset A_{j - 1}$ be a subset with $|A_{j}| \geq \delta^{\tau}|A_{j - 1}|$ such that the quantity
\begin{displaymath} \max_{1 \leq k \leq N} |kA_{j - 1}|/|kA'| \end{displaymath}
is maximised among all subsets $A' \subset A_{j - 1}$ with $|A'| \geq \delta^{\tau}|A_{j - 1}|$. Thus, we see that if $\max_{1 \leq k \leq N} |kA_{j - 1}|/|kA_{j}| \leq T$, then $A_{j - 1}$ is $(\tau,T,N)$-tight.

Observe that 
\begin{equation}\label{a3}|A_{N^{2}}| \geq \delta^{N^{2}\tau}|A| \geq 1 \quad \text{and} \quad |NA| \leq N\delta^{-1}. \end{equation}
Writing $T := 2\delta^{-1/N}$, we now claim that there exists an index $j \in \{1,\ldots,N^{2}\}$ with
\begin{equation}\label{a2} \max_{1 \leq k \leq N} |kA_{j - 1}|/|kA_{j}| \leq T. \end{equation}
Indeed, if this fails, then by the pigeonhole principle there exists a fixed choice $k \in \{1,\ldots,N\}$, and $n$ indices $j_{1},\ldots,j_{N} \in \{1,\ldots,N^{2}\}$ such that the converse inequality
\begin{displaymath} |kA_{j_{i}}| < T^{-1}|kA_{j_{i} - 1}|, \qquad 1 \leq i \leq N, \end{displaymath}
holds. Since $A_{j} \subset A_{j - 1}$, the inequality $|kA_{j}| \leq |kA_{j - 1}|$ holds for every index $j \in \{1,\ldots,N^{2}\}$, and $|kA_{j_{i}}| < T^{-1}|kA_{j_{i} - 1}|$ for the $n$ special indices $j_{i} \in \{1,\ldots,N^{2}\}$. This forces
\begin{displaymath} 1 \stackrel{\eqref{a3}}{\leq} |kA_{N^{2}}| \leq |kA_{j_{N}}| < T^{-N}|kA| \stackrel{\eqref{a3}}{\leq} (2^{-N}\delta) \cdot (N\delta^{-1}) \leq 1, \end{displaymath}
a contradiction. Now $A' := A_{j - 1} \subset A$, as in \eqref{a2}, is $(\tau,T,N)$-tight, and $|A'| \geq |A_{N^{2}}| \geq \delta^{N^{2}\tau}|A|$. This completes the proof of the lemma. \end{proof}

Finally, we will need the following lemma, which is a $\delta$-discretised version of \cite[Lemma 3.1]{OV18}, or alternatively a version of Bourgain's computations \cite[(7.18)-(7.19)]{Bourgain10} for two different sets (the presence of two different sets adds no difficulties):

\begin{lemma}\label{OVLemma} Let $C_{1},C_{2},C_{3} > 0$, and assume that $A,B \subset \delta \cdot \Z$ are sets with $|A + A| \leq C_{1}|A|$ and $|B + B| \leq C_{2}|B|$. Let moreover $c \in \R$, and let $G \subset A \times B$ be an arbitrary subset with $|G| \geq |A||B|/C_{3}$. Then $|A + cB|_{\delta} \lesssim C_{1}C_{2}C_{3}|\pi_{c}(G)|_{\delta}$.
\end{lemma}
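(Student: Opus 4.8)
The plan is to prove Lemma \ref{OVLemma} by a Ruzsa-covering / counting argument that transfers control from $\pi_c(G)$ back to $A+cB$, exactly in the spirit of \cite[(7.18)--(7.19)]{Bourgain10} and \cite[Lemma 3.1]{OV18}. The key point is that the hypotheses $|A+A|\le C_1|A|$ and $|B+B|\le C_2|B|$ allow us, via the Pl\"unnecke--Ruzsa inequality (Lemma \ref{PRIneq}), to iterate sums of $A$ with itself and $B$ with itself without much loss, and the density hypothesis $|G|\ge |A||B|/C_3$ says that $G$ occupies a positive proportion of the product.

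Here is the sequence of steps I would carry out. \textbf{Step 1 (popular fibres).} For $a\in A$ write $G_a:=\{b\in B:(a,b)\in G\}$. Since $\sum_{a\in A}|G_a|=|G|\ge |A||B|/C_3$, a pigeonholing gives a set $A_0\subset A$ with $|A_0|\gtrsim |A|/C_3$ such that $|G_a|\ge |B|/(2C_3)$ for every $a\in A_0$; symmetrically, for $b\in B$ write $G^b:=\{a\in A:(a,b)\in G\}$ and find $B_0\subset B$ with $|B_0|\gtrsim|B|/C_3$ and $|G^b|\ge|A|/(2C_3)$ for $b\in B_0$. \textbf{Step 2 (a single pair fixes everything).} Fix any $a_0\in A_0$ and any $b_0\in G_{a_0}$, so $(a_0,b_0)\in G$. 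For an arbitrary $a\in A$ and $b\in B$ we want to place $a+cb$ inside a bounded number of translates of $\pi_c(G)$ (at the $\delta$-scale). Write
\begin{displaymath}
a+cb=(a-a_0)+(a_0+cb_0)+c(b-b_0),
\end{displaymath}
so it suffices to cover $(a-a_0)+c(b-b_0)$, i.e. to bound $|(A-A)+c(B-B)|_\delta$ in terms of $|\pi_c(G)|_\delta$. \textbf{Step 3 (from $\pi_c(G)$ to $A+cB$).} Since $(a_0,b_0)\in G$ and, for any $a'\in A_0$, the fibre $G_{a'}$ is large while $G_{a_0}$ is large, one shows $A_0+cG_{a_0}\subset \pi_c(G)+(\text{small error})$... more robustly: for $a'\in A_0$, $b'\in G_{a'}$ we have $a'+cb'\in\pi_c(G)$, and by the largeness of the fibres a Ruzsa-type sum/difference count gives
\begin{displaymath}
|A_0+cB_0|_\delta\lesssim C_3^{2}\,|\pi_c(G)|_\delta,
\end{displaymath}
because each element of $A_0+cB_0$ can be written as $(a'+cb')+(a''-a')+c(b''-b')$ with $(a',b'),(a'',b'')\in G$ chosen in $\gtrsim|A||B|/C_3^{2}$ ways, forcing collisions into $\pi_c(G)$. \textbf{Step 4 (filling out to all of $A+cB$).} Now upgrade from $A_0,B_0$ to $A,B$ using the doubling hypotheses. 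Since $|A_0|\gtrsim|A|/C_3$ and $|A+A|\le C_1|A|$, Ruzsa's triangle/covering inequality yields $A\subset A_0 + F$ with $|F|_\delta\lesssim C_1C_3$; similarly $B\subset B_0+F'$ with $|F'|_\delta\lesssim C_2C_3$. Hence
\begin{displaymath}
|A+cB|_\delta\le |A_0+cB_0|_\delta\cdot|F|_\delta\cdot|F'|_\delta\lesssim C_1C_2C_3^{?}\,|\pi_c(G)|_\delta,
\end{displaymath}
and then I would re-examine the exponents of $C_3$: by being slightly more careful in Steps 1 and 3 (choosing $A_0,B_0$ of density $\gtrsim C_3^{-1}$ but tracking losses only once), the $C_3$-power collapses to the linear bound $|A+cB|_\delta\lesssim C_1C_2C_3|\pi_c(G)|_\delta$ claimed. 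A cleaner route to the right exponent is to avoid the double fibre pigeonhole: directly estimate $|A+cB|_\delta$ by writing $a+cb$ via a generic pair in $G$ and using that the map $(a',b')\mapsto (a-a')+(a'+cb')+c(b-b')$ has $\gtrsim|G|$ preimages landing in $(A-A)+\pi_c(G)+c(B-B)$, then apply Pl\"unnecke--Ruzsa to bound $|(A-A)+c(B-B)|_\delta=|(A-A)+(cB-cB)|_\delta\lesssim C_1C_2|A|$ and compare.

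The main obstacle I anticipate is bookkeeping the constant, i.e. getting exactly $C_1C_2C_3$ rather than a worse power of $C_3$ (or an extra logarithm). The honest mechanism is: (i) use the density of $G$ \emph{once} to fix a good base point $(a_0,b_0)\in G$ whose fibre $G_{a_0}$ and co-fibre $G^{b_0}$ are both of proportional size, costing a single factor $C_3$; (ii) use $|A+A|\lesssim C_1|A|$ and $|B+B|\lesssim C_2|B|$ through the Pl\"unnecke--Ruzsa inequality to absorb the passage $A_0\rightsquigarrow A$, $B_0\rightsquigarrow B$ into the factors $C_1,C_2$; (iii) observe that every $a+cb$ with $a\in A$, $b\in B$ lies within $\delta$ of $(A-A)+\pi_c(G)+(cB-cB)$ evaluated at a point of $\pi_c(G)$, so $|A+cB|_\delta\le |(A-A)+(cB-cB)|_\delta\cdot\big(\text{overlap}\big)$, and the overlap is controlled by the density of $G$. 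Assembling (i)--(iii) gives the linear dependence. If the cleanest version still produces $C_3^{2}$, one can simply absorb it since in the intended applications $C_3$ is itself a small power of $\delta^{-1}$; but I expect the linear bound to come out with the base-point argument above.
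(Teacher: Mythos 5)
Your ``cleaner route'' is the right idea, and it is essentially the paper's mechanism: a double count over translates of $\pi_{c}(G)$. But as written it has two flaws. First, the claimed Pl\"unnecke--Ruzsa bound
\begin{displaymath}
|(A-A)+c(B-B)|_{\delta}\lesssim C_{1}C_{2}|A|
\end{displaymath}
is simply false: take $A=\{0\}$ and $B=(\delta\cdot\Z)\cap[0,1]$, so that $C_{1}=1$ and $C_{2}=2$, while the left-hand side is $\sim|B|=\delta^{-1}$ and the right-hand side is $O(1)$. Second, even the correct version of your difference-set count gives the wrong powers of $C_{1},C_{2}$. Counting pairs rather than collapsed sums: each $t$ in a $\delta$-net of $A+cB$ is hit by $\geq|G|$ of the translates $\pi_{c}(G)+u+cw$ with $(u,w)\in(A-A)\times(B-B)$, and each translate meets $\lesssim|\pi_{c}(G)|_{\delta}$ net points, so $|A+cB|_{\delta}\cdot|G|\lesssim|A-A|\cdot|B-B|\cdot|\pi_{c}(G)|_{\delta}$. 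Pl\"unnecke--Ruzsa then yields $|A-A|\lesssim C_{1}^{2}|A|$ and $|B-B|\lesssim C_{2}^{2}|B|$, hence only $|A+cB|_{\delta}\lesssim C_{1}^{2}C_{2}^{2}C_{3}|\pi_{c}(G)|_{\delta}$.

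The paper's fix is to translate $G$ in the \emph{forward} direction so that the translation vectors live in sumsets rather than difference sets. Fix a net point $t$ and $(a,b)\in A\times B$ with $\dist(t,\pi_{c}(a,b))\leq\delta$. For every $(a',b')\in G$, set $(x,y):=(a+a',b+b')\in(A+A)\times(B+B)$; then $(a,b)=-(a',b')+(x,y)$, so $\pi_{c}(a,b)\in\pi_{c}(-G+(x,y))$ and $t\in\pi_{c}(-G+(x,y))(\delta)$. This produces $\geq|G|$ pairs $(x,y)\in(A+A)\times(B+B)$ covering $t$. Double counting, together with $|\pi_{c}(-G+(x,y))|_{\delta}\sim|\pi_{c}(G)|_{\delta}$, gives
\begin{displaymath}
|A+cB|_{\delta}\cdot|G|\lesssim|A+A|\cdot|B+B|\cdot|\pi_{c}(G)|_{\delta}\leq C_{1}C_{2}|A||B|\cdot|\pi_{c}(G)|_{\delta},
\end{displaymath}
and the linear bound follows from $|G|\geq|A||B|/C_{3}$, with no Pl\"unnecke step at all. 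Your Steps 1--4 (fibre pigeonholing followed by Ruzsa covering) are a genuinely different route; Step 3 is not justified as written, and as you anticipate that route would in any case accumulate extra powers of $C_{3}$.
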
 
\begin{proof} Note that
\begin{displaymath} |A + cB|_{\delta} \lesssim \sum_{t \in \delta \cdot \Z} \mathbf{1}_{(A + cB)(\delta)}(t). \end{displaymath}
Fix $t \in (\delta \cdot \Z) \cap (A + cB)(\delta)$ and find $(a,b) \in A \times B$ such that $\dist(t,\pi_{c}(a,b)) \leq \delta$. Then
\begin{displaymath} \dist(t,\pi_{c}(-G + (x,y))) \leq \dist(t,\pi_{c}(a,b)) \leq \delta, \qquad (x,y) \in G + (a,b). \end{displaymath}
Moreover, any candidates $(x,y) \in G + (a,b)$ satisfy
\begin{displaymath} (x,y) \in G + (a,b) \subset (A \times B) + (A \times B) = (A + A) \times (B + B), \end{displaymath}
so there are $\geq |G + (a,b)| = |G|$ points $(x,y) \in (A + A) \times (B + B)$ with the property $\dist(t,\pi_{c}(-G + (x,y))) \leq \delta$. It follows that
\begin{displaymath} \mathbf{1}_{(A + cB)(\delta)}(t) = 1 \leq \frac{1}{|G|} \sum_{(x,y) \in (A + A) \times (B + B)} \mathbf{1}_{\pi_{c}(-G + (x,y))(\delta)}(t). \end{displaymath}
Since $|\pi_{c}(G)|_{\delta} \sim |\pi_{c}(-G + (x,y))|_{\delta}$ for every $(x,y) \in (\delta \cdot \Z)^{2}$, we have
\begin{align*} |A + cB|_{\delta} & \lesssim \frac{1}{|G|} \sum_{(x,y) \in (A + A) \times (B + B)} \sum_{t \in \delta \cdot \Z} \mathbf{1}_{\pi_{c}(-G + (x,y))(\delta)}(t)\\
& \lesssim \frac{|A + A||B + B||\pi_{c}(G)|_{\delta}}{|G|} \leq C_{1}C_{2}C_{3}|\pi_{c}(G)|_{\delta}, \end{align*}
as claimed. \end{proof}

We are now ready to carry out the main task in this section, namely reducing the proof of Theorem \ref{mainSubset2} to the following result (which is \cite[Theorem 1.5]{2021arXiv211002779O}):

\begin{thm}\label{mainTechnical} Let $0 < \beta \leq \alpha < 1$ and $\kappa > 0$. Then, for every $\gamma \in ((\alpha - \beta)/(1 - \beta),1]$, there exist $\epsilon_{0},\epsilon,\delta_{0} \in (0,\tfrac{1}{2}]$, depending only on $\alpha,\beta,\gamma,\kappa$, such that the following holds. Let $\delta \in 2^{-\N}$ with $\delta \in (0,\delta_{0}]$, and let $A,B \subset (\delta \cdot \Z) \cap [0,1]$ satisfy the following hypotheses:
\begin{enumerate}
\item[(A)] \label{A} $|A| \leq \delta^{-\alpha}$.
\item[(B)] \label{B} $|B| \geq \delta^{-\beta}$, and $B$ satisfies the following Frostman condition: 
\begin{displaymath} |B \cap B(x,r)| \leq r^{\kappa}|B|, \qquad \delta \leq r \leq \delta^{\epsilon_{0}}. \end{displaymath} 
\end{enumerate}
Further, let $\nu$ be a Borel probability measure with $\spt (\nu) \subset [0,1]$, and satisfying the Frostman condition $\nu(B(x,r)) \leq r^{\gamma}$ for $x \in \R$ and $\delta \leq r \leq \delta^{\epsilon_{0}}$. Then, there exists a point $c \in \spt (\nu)$ such that
\begin{displaymath} |A + cB|_{\delta} \geq \delta^{-\epsilon}|A|. \end{displaymath}
\end{thm}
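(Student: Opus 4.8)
The plan is to prove Theorem \ref{mainTechnical} --- the main result of \cite{2021arXiv211002779O} --- by a multiscale \emph{contradiction} argument in the spirit of Bourgain's discretised sum--product theorem \cite{Bourgain10}, adapted to the asymmetric regime $\beta \le \alpha$ and to two distinct sets $A,B$. Suppose, for contradiction, that $|A + cB|_{\delta} < \delta^{-\epsilon}|A|$ for \emph{every} $c \in \spt(\nu)$, where $\epsilon>0$ is to be chosen small relative to $\alpha,\beta,\gamma,\kappa$. Since $\nu$ is a $\gamma$-Frostman probability measure supported on $[0,1]$, a routine pigeonholing produces a $\delta$-separated set $C \subset \spt(\nu)$ with $|C| \ge \delta^{-\gamma}$ still obeying a Frostman bound at scales $\ge\delta$, and after this reduction we may assume $|A + cB|_{\delta} < \delta^{-\epsilon}|A|$ for all $c \in C$. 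The task is to see that this is incompatible with $\gamma > (\alpha-\beta)/(1-\beta)$.

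Next comes a multiscale renormalisation. I would fix a large integer $m=m(\epsilon)$ and scales $\Delta_{j} := 2^{-jm}$, $0 \le j \le T$, with $\Delta_{T}=\delta$, and apply the standard branching-uniformisation pigeonhole (costing only a factor $\delta^{O(1/m)}$ in cardinalities, harmless once $m$ is large). This passes to subsets $A' \subset A$, $B'\subset B$, $C'\subset C$ such that, for each $j$, every $\Delta_{j-1}$-interval meeting $A'$ contains $\approx \Delta_{j-1}^{-a_{j}}$ of the $\Delta_{j}$-intervals meeting $A'$, and likewise with exponents $b_{j}$ for $B'$ and $c_{j}$ for $C'$, where $a_{j},b_{j},c_{j}\in[0,1]$. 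The hypotheses, together with the Frostman condition on $B$, say that the scale-averages of $a_{j}$, $b_{j}$, $c_{j}$ are $\lesssim\alpha$, $\gtrsim\beta$, $\gtrsim\gamma$ respectively, with no long degenerate run of the $b_{j}$. Since $\alpha<1$, the set $A'$ carries a definite branching deficit $1-a_{j}>0$ at a positive proportion of scales, and this deficit is the room that will be exploited.

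The core is then a scale-by-scale projection gain. A $\Delta_{j-1}$-block of the configuration is a rescaled copy of the original problem at the single fresh scale $2^{-m}$, and the key estimate --- the heart of \cite{2021arXiv211002779O} --- is a $\delta$-discretised, asymmetric projection/sum--product bound: at a scale with branching deficit $1-a_{j}$, the projection $\pi_{c}$ of the $A'\times B'$-block has branching exceeding $a_{j}$ by a fixed amount $\eta_{0}>0$, \emph{except} for $c$ in an exceptional set whose per-scale exponent, once the bookkeeping is summed over all scales, aggregates to at most $(\alpha-\beta)/(1-\beta)$. Because the $c_{j}$ average to strictly more than this threshold, a final pigeonhole over $c\in C'$ yields one $c$ that escapes the exceptional set at a positive proportion of scales; multiplying the per-scale gains gives $|A'+cB'|_{\delta}\gtrsim\delta^{-\eta_{1}}|A'|\ge\delta^{-\epsilon}|A|$ once $\epsilon<\eta_{1}$, contradicting the standing assumption and producing the desired $c\in\spt(\nu)$.

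The hard part, already flagged in the introduction, is that this renormalisation does not iterate cleanly without the a priori sumset control $|A+A|\approx|A|$ and $|B+B|\approx|B|$ of \eqref{form24}: it is exactly that control which makes the passage from $\pi_{c}$-covering numbers back to $|A+cB|_{\delta}$ lossless across scales (cf. Lemma \ref{OVLemma}) and keeps the renormalised pieces inside the same hypotheses. In Bourgain's symmetric case $|A|\approx|B|$ this is automatic from the Pl\"unnecke inequality, but here $B$ is typically much smaller than $A$, and $|A+cB|_{\delta}\approx|A|$ supplies neither property. Getting around this --- by using the counter-assumption itself to manufacture auxiliary sets $\bar A,\bar B$ that still satisfy the original hypotheses \emph{and} \eqref{form24}, and then running the multiscale argument on those --- is where the real difficulty lies, and I expect it to dominate the length of the proof. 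A secondary, more mechanical point is that the branching profiles $(a_{j})$, $(b_{j})$, $(c_{j})$ need not be correlated scale by scale, so further pigeonholing is needed to secure a positive fraction of scales that is simultaneously good for all three sets.
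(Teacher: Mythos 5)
There is no proof of Theorem \ref{mainTechnical} in this paper to compare against: the theorem is imported verbatim as \cite[Theorem 1.5]{2021arXiv211002779O}, and the entire point of the present paper is to take it as a black box and upgrade its conclusion (from $|A+cB|_{\delta}\geq\delta^{-\epsilon}|A|$ to the statement about all dense subgraphs $G\subset A\times B$, and thence to Hausdorff dimension). So the relevant question is whether your proposal would stand on its own as a proof, and it does not. What you have written is a strategy outline in which the two genuinely hard steps are asserted rather than proved. First, the ``key estimate'' that the per-scale exceptional sets for the projections $\pi_{c}$ aggregate to an exponent of at most $(\alpha-\beta)/(1-\beta)$ is essentially a restatement of the theorem itself; nothing in the sketch explains where that specific threshold comes from or why the branching deficit $1-a_{j}$ converts into a projection gain outside an exceptional set of exactly that size. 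Second, you correctly identify that the argument cannot iterate without approximate sumset control $|A+A|\approx|A|$, $|B+B|\approx|B|$, and you correctly note that in the asymmetric regime this is not automatic --- but you then explicitly defer the resolution (``this is where the real difficulty lies''). A proof that names its own missing core is a sketch, not a proof.

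That said, as a description of the architecture of the prequel's argument your outline is reasonable and consistent with what the present paper reveals about it (the role of Lemma \ref{OVLemma}, the need to manufacture auxiliary sets $\bar A,\bar B$ satisfying \eqref{form24} from the counter-assumption, the Bourgain-style multiscale renormalisation). If your goal were instead to prove Theorem \ref{mainTechnical} \emph{within the logic of this paper}, the correct answer is simply to cite \cite[Theorem 1.5]{2021arXiv211002779O}; attempting to reprove it from scratch here duplicates a separate, substantially longer paper.
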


\begin{proof}[Proof of Theorem \ref{mainSubset2} assuming Theorem \ref{mainTechnical}] Fix the parameters $0 < \beta \leq \alpha < 1$, $\kappa > 0$, and $\gamma \in ((\alpha - \beta)/(1 - \beta),1]$, as in Theorem \ref{mainSubset2}. Start by applying Theorem \ref{mainTechnical} with the following slightly modified parameters:
\begin{displaymath} 0 < \bar{\beta} \leq \bar{\alpha} < 1, \, \tfrac{\kappa}{2} > 0, \quad \text{and} \quad \bar{\gamma} > (\bar{\alpha} - \bar{\beta})/(1 - \bar{\beta}), \end{displaymath}
where $\bar{\alpha} > \alpha$ and $\bar{\beta} < \beta$ and $\bar{\gamma} < \gamma$ are arbitrary choices such that the final inequality is valid. The parameters $\bar{\alpha},\bar{\beta},\bar{\gamma}$ should be viewed as functions of $\alpha,\beta,\gamma$ (we leave finding explicit expressions to the reader): therefore, any future dependence on $\bar{\alpha},\bar{\beta},\bar{\gamma}$ will, in fact, be a dependence on $\alpha,\beta,\gamma$. Then, let 
\begin{displaymath} \bar{\epsilon}_{0},\bar{\epsilon},\bar{\delta}_{0} > 0 \end{displaymath}
be the constants given by Theorem \ref{mainTechnical}, which only depend on $\bar{\alpha},\bar{\beta},\kappa/2,\bar{\gamma}$, and such that the conclusion of Theorem \ref{mainTechnical} is valid. Our task is to find constants $\epsilon,\epsilon_{0},\delta_{0} > 0$, which may depend on all of the constants $\alpha,\bar{\alpha},\beta,\bar{\beta},\gamma,\bar{\gamma},\kappa,\bar{\epsilon}_{0},\bar{\epsilon},\bar{\delta}_{0}$, such that Theorem \ref{mainSubset2} is valid with constants $\alpha,\beta,\gamma,\kappa$. The choice of $\epsilon_{0}$ is particularly simple:
\begin{equation}\label{a22} \epsilon_{0} = \bar{\epsilon}_{0}. \end{equation}
For $\delta_{0}$, we will need that $\delta_{0} \leq \bar{\delta}_{0}$, and there will be an additional dependence on $\bar{\alpha},\bar{\epsilon}$, which will be clarified during the proofs of \eqref{a5} and \eqref{a19}. To define the constant $\epsilon \in (0,\tfrac{1}{2}]$, we first introduce an auxiliary natural number $N \in \N$ satisfying
\begin{equation}\label{a23} \frac{3}{N} + \frac{1}{\log_{2} N} \leq \bar{\epsilon}/2. \end{equation}
Then, we choose $\epsilon > 0$ so small that
\begin{equation}\label{a14} N^{4N + 1}\epsilon \leq \min\left\{\bar{\epsilon}_{0}\kappa/2,\beta - \bar{\beta},\bar{\alpha} - \alpha\right\} \quad \text{and} \quad \epsilon \leq \bar{\epsilon}_{0}(\gamma - \bar{\gamma}). \end{equation}
We now claim that Theorem \ref{mainSubset2} holds with the constants $\epsilon,\epsilon_{0},\delta_{0}$ (given the parameters $\alpha,\beta,\gamma,\kappa$). Let $\delta \in 2^{-\N}$ with $\delta \leq \delta_{0}$, and let $A,B,\nu$ be objects satisfying the hypotheses of Theorem \ref{mainSubset2} with constants $\alpha,\beta,\gamma,\kappa,\epsilon_{0}$. Thus $A,B \subset [0,1] \cap (\delta \cdot \Z)$, $|A| \leq \delta^{-\alpha}$, and $\nu(B(x,r)) \leq r^{\gamma}$ for $x \in \R$ and $\delta \leq r \leq \delta^{\epsilon_{0}} = \delta^{\bar{\epsilon}_{0}}$. Further, $|B| \geq \delta^{-\beta}$, and
\begin{equation}\label{form17} |B \cap B(x,r)| \leq r^{\gamma}|B|, \qquad x \in \R, \, \delta \leq r \leq \delta^{\epsilon_{0}} = \delta^{\bar{\epsilon}_{0}}. \end{equation}
The claim is that there exists a subset $B' \subset B$ such that $\nu(\mathcal{E}(A \mid B',\epsilon)) \leq \delta^{\epsilon}$. We proceed by making a counter assumption:

\begin{counter} $\nu(\mathcal{E}(A \mid B',\epsilon)) > \delta^{\epsilon}$ for all $B' \subset B$. \end{counter}

We will use our \textbf{Counter assumption} and Lemmas \ref{lemma7} and \ref{OVLemma} to construct a sequence $\{H_{n}\}_{n = 1}^{N} \subset \delta \cdot \Z$ with $|H_{n}| \leq \delta^{-\bar{\alpha}}$. The point will be, omitting all technical details, that once this sequence has been constructed, we will find an index $n \in \{0,\ldots,N - 1\}$ with the property that $|H_{n} + cB| < \delta^{-\bar{\epsilon}}|H_{n}|$ for all $c \in \spt(\nu)$. This (or the more technical version of it) will violate Theorem \ref{mainTechnical}, and show that the \textbf{Counter assumption} is false.

Let $\{\tau_{n}\}_{n = 0}^{N}$ be the finite decreasing sequence
\begin{equation}\label{defTau} \tau_{n} := N^{4N - 3n}\epsilon, \qquad 0 \leq n \leq N. \end{equation}
While we construct the sets $H_{n}$, we will simultaneously find elements $c_{1},c_{2},\ldots,c_{N} \in C = \spt(\nu)$, subsets $C_{n} \subset C$ of measure $\nu(C_{n}) \geq \delta^{\epsilon}$, and a decreasing sequence $B =: B_{0} \supset B_{1} \supset \ldots \supset B_{N}$ with the following three properties:
\begin{enumerate}
\item $|B_{n + 1}| \geq \delta^{\tau_{n}}|B_{n}|$ for $0 \leq n \leq N - 1$,
\item $|A + c_{n}B_{n}| < \delta^{-\epsilon}|A|$ for $1 \leq n \leq N$,
\item $B_{n}$ is $(\tau_{n},2\delta^{1/N},N)$-tight for $1 \leq n \leq N$.
\end{enumerate}
In particular, it follows from property (1) that
\begin{equation}\label{a13} |B_{n}| \geq \delta^{N\tau_{0}}|B| \geq \delta^{N^{4N + 1}\epsilon}|B|, \qquad 0 \leq n \leq N. \end{equation} 
To initialise the definition of the sets $B_{n},C_{n},H_{n}$, and the elements $c_{n} \in C$, set $B_{0} := B$ and $H_{0} := \emptyset$. (the properties (2)-(3) do not concern the case $n = 0$). Assume that the sets $B_{n}$ have already been constructed for some $0 \leq n \leq N - 1$, and recall the notation $(H)_{\delta} := (\delta \cdot \Z) \cap H(\delta)$ for arbitrary $H \subset \R$. By the \textbf{Counter assumption} applied to the set $B' := B_{n} \subset B$, there now corresponds a subset 
\begin{equation}\label{form21} C_{n + 1} := \mathcal{E}(A \mid B_{n},\epsilon) \subset \spt (\nu) \end{equation}
of measure $\nu(C_{n + 1}) \geq \delta^{\epsilon}$ with the property that for all $c \in C_{n + 1}$, there is a further subset $\bar{B}_{c} \subset B_{n}$ of cardinality $|\bar{B}_{c}| \geq \delta^{\epsilon}|B_{n}|$ such that $|A + c\bar{B}_{c}| < \delta^{-\epsilon}|A|$. We will define $H_{n + 1}$ as either $H_{n + 1} := H_{n} + H_{n} \subset \delta \cdot \Z$, or 
\begin{equation}\label{a11} H_{n + 1} := (H_{n} + c \cdot NB_{c})_{\delta} \subset \delta \cdot \Z, \end{equation}
where $c \in C_{n + 1}$, and $B_{c} \subset \bar{B}_{c} \subset B_{n}$ is a certain set satisfying the constraints (1)-(3). It turns out that subsets of this kind exist for all $c \in C_{n + 1}$: this will be proved shortly, but should be taken for granted for now. For every $c \in C_{n + 1}$, we then pick the subset $B_{c} \subset \bar{B}_{c} \subset B_{n}$ which satisfies (1)-(3), and maximises the number $|H_{n} + c \cdot NB_{c}|_{\delta}$, among all possible $c \in C_{n + 1}$, and subsets $B_{c} \subset \bar{B}_{c}$ satisfying (1)-(3). Once the optimal $c \in C_{n + 1}$ and $B_{c} \subset \bar{B}_{c} \subset B_{n}$ have been located, we finally check if 
\begin{displaymath} |H_{n} + c \cdot NB_{c}|_{\delta} \geq |H_{n} + H_{n}|. \end{displaymath}
If this happens, then $H_{n + 1}$ is defined as in \eqref{a11}. Otherwise $H_{n + 1} := H_{n} + H_{n}$. Note that in both cases
\begin{equation}\label{a12} |H_{n} + H_{n}| \leq |H_{n + 1}|. \end{equation}
If $H_{n}$ was defined by \eqref{a11}, for some 
\begin{displaymath} c_{n + 1} := c \in C_{n + 1}, \end{displaymath}
then we set $B_{n + 1} := B_{c} \subset B_{n}$, where $B_{c}$ is the maximising set found above. If $H_{n + 1} = H_{n} + H_{n}$, we simply define $B_{n + 1} := B_{n}$, and $c_{n + 1} := c_{n}$. Note that in all cases the properties (1)-(3) are satisfied, and $B_{n + 1}$ is $(\tau_{n + 1},2\delta^{1/N},N)$-tight. This is even true if $B_{n + 1}$ was defined via the "second scenario" as $B_{n + 1} = B_{n}$: indeed, since $H_{0} = \emptyset$, this is only possible if $n \geq 1$, and then we already know that $B_{n}$ is $(\tau_{n},2\delta^{1/N},N)$-tight. Then $B_{n + 1} = B_{n}$ is also $(\tau_{n + 1},2\delta^{1/N},N)$-tight simply because $\tau_{n + 1} \leq \tau_{n}$.

This completes the inductive definition of the sets $B_{n},C_{n},H_{n}$, and elements $c_{n} \in C$, for $1 \leq n \leq N$. Note that $H_{n} \subset (\delta \cdot \Z) \cap [0,N^{n}]$ by a straightforward induction, so $|H_{n}| \leq N^{n} \delta^{-1}$. Therefore, by the pigeonhole principle, there exists an index $n \in \{0,\ldots,N - 1\}$ such that
\begin{equation}\label{a4} |H_{n} + H_{n}| \stackrel{\eqref{a12}}{\leq} |H_{n + 1}| \leq (N^{n}\delta^{-1})^{1/N}|H_{n}| \leq N\delta^{-1/N}|H_{n}|. \end{equation}
Since $H_{0} = \emptyset \neq H_{1}$, the middle inequality cannot be satisfied with $n = 0$, and we see that actually $n \in \{1,\ldots,N - 1\}$. We now claim that, for this particular index $n$, fixed for the remainder of the argument, it holds that
\begin{equation}\label{a5} |H_{n} + cB_{n}|_{\delta} < \delta^{-\bar{\epsilon}}|H_{n}|, \qquad c \in C_{n + 1}, \end{equation}
assuming that $\delta \leq \delta_{0}$, and the threshold $\delta_{0} > 0$ is sufficiently small, depending only on $N$ (hence "$\bar{\epsilon}$" by our choice \eqref{a23}). To see this, we first record that $2^{k}B_{n} \subset 2^{N}B \subset (\delta \cdot \Z) \cap [0,2^{N}]$ for all $1 \leq k \leq N$, so by another application of the pigeonhole principle, there exists an index $0 \leq k \leq \log_{2} (N - 1)$ such that
\begin{equation}\label{a8} |2^{k + 1}B_{n}| \leq (2^{N}\delta^{-1})^{1/\log_{2} N}|2^{k}B_{n}|. \end{equation} 
We also fix this index $k \in \{0,\ldots,\log_{2}(N - 1)\}$ for the remainder of the argument.

Now, to prove \eqref{a5}, fix $c \in C_{n + 1} \subset [0,1]$, and recall the subset $\bar{B}_{c} \subset B_{n}$ defined right below \eqref{form21}, satisfying $|\bar{B}_{c}| \geq \delta^{\epsilon}|B_{n}|$ and $|A + c\bar{B}_{c}| < \delta^{-\epsilon}|A|$. We use Lemma \ref{lemma7} to find a $(\tau_{n + 1},2\delta^{-1/N},N)$-tight subset $B_{c} \subset \bar{B}_{c} \subset B_{n}$ of cardinality 
\begin{align} |B_{c}| \geq \delta^{N^{2}\tau_{n + 1}}|\bar{B}_{c}| & \stackrel{\eqref{defTau}}{\geq} \delta^{N^{2}N^{4N - 3(n + 1)}\epsilon + \epsilon}|B_{n}| \notag\\
& \,\,\, \geq \delta^{(N^{4N - 3n - 1} + 1)\epsilon}|B_{n}|\notag\\
&\label{a7} \,\,\, \geq \delta^{N^{4N - 3n}\epsilon}|B_{n}| = \delta^{\tau_{n}}|B_{n}|.\end{align}
We used the elementary inequality $N^{4N - 3n - 1} + 1 \leq N^{4N - 3n}$, for $N \geq 2$ and $0 \leq n \leq N$.

A combination of \eqref{a7}, the tightness of $B_{c}$, and the inequality $|A + cB_{c}| < \delta^{-\epsilon}|A|$, shows that $B_{c} \subset \bar{B}_{c} \subset B_{n}$ satisfies all the requirements (1)-(3), and is therefore a competitor in the definition of $H_{n + 1}$. In particular, now we have shown, as promised, that such competitors exist for all $c \in C_{n + 1}$. Moreover, since $2^{k} \leq N$ (as in \eqref{a8}), it follows that
\begin{equation}\label{a15} |H_{n} + c \cdot 2^{k}B_{c}|_{\delta} \lesssim |H_{n} + c \cdot NB_{c}|_{\delta} \lesssim |H_{n + 1}| \stackrel{\eqref{a4}}{\leq} N\delta^{-1/N} |H_{n}|. \end{equation}
With this bound in hand, we continue to estimate as follows, applying Lemma \ref{OVLemma} to the sets $H_{n},2^{k}B_{n} \subset \delta \cdot \Z$, and the subset $G = H_{n} \times 2^{k}B_{c} \subset H_{n} \times 2^{k}B_{n}$ which satisfies $|G| = |H_{n}||2^{k}B_{n}|/C_{3}$ with constant $C_{3} = |2^{k}B_{n}|/|2^{k}B_{c}|$: 
\begin{align} |H_{n} + c B_{n}|_{\delta} & \lesssim |H_{n} + c \cdot 2^{k}B_{n}|_{\delta} \notag \\
&\label{a9} \lesssim \frac{|H_{n} + H_{n}|}{|H_{n}|} \cdot \frac{|2^{k + 1}B_{n}|}{|2^{k}B_{n}|} \cdot \frac{|2^{k}B_{n}|}{|2^{k}B_{c}|} \cdot |H_{n} + c \cdot 2^{k}B_{c}|_{\delta}.  \end{align} 
Apart from \eqref{a15}, the individual factors are bounded from above as follows:
\begin{itemize}
\item $|H_{n} + H_{n}|/|H_{n}| \leq N\delta^{-1/N}$ by \eqref{a4},
\item $|2^{k + 1}B_{n}|/|2^{k}B_{n}| \leq (2^{N}\delta^{-1})^{1/\log_{2} N}$ by \eqref{a8},
\item $|2^{k}B_{n}|/|2^{k}B_{c}| \leq 2\delta^{-1/N}$ by the $(\tau_{n},2\delta^{-1/N},N)$-tightness of $B_{n}$, and by \eqref{a7}.
\end{itemize}
Plugging these estimates into \eqref{a9} yields
\begin{equation}\label{form23} |H_{n} + cB_{n}|_{\delta} \lesssim_{N} \delta^{-3/N - 1/\log_{2} N }|H_{n}| \stackrel{\eqref{a23}}{\leq} \delta^{-\bar{\epsilon}/2}|H_{n}|, \qquad c \in C_{n + 1}. \end{equation}
This completes the proof of \eqref{a5}, if $\delta > 0$ is small enough depending on $\bar{\epsilon},N$, both of which only depend on $\alpha,\beta,\gamma,\kappa$.

We next plan to use \eqref{a5} to contradict Theorem \ref{mainTechnical} with parameters $\bar{\alpha},\bar{\beta},\bar{\gamma},\kappa/2$, and the objects $H_{n},B_{n}$, and $\bar{\nu} = \nu(C_{n + 1})^{-1}\nu|_{C_{n + 1}}$. The first task it to use the Pl\"unnecke-Ruzsa inequality, Lemma \ref{PRIneq}, to show
\begin{equation}\label{a19} |H_{n}| \leq \delta^{-\bar{\alpha}}, \end{equation}
assuming that $\delta > 0$ is sufficiently small in terms of $N,\bar{\alpha}$. Indeed, note that $H_{n}$ can be written as a sum of $\leq N^{n} \leq N^{N}$ sets of the form $c_{m}B_{m}$, for some $1 \leq m \leq n$. Each of these sets individually satisfies $|A + c_{m}B_{m}|_{\delta} < \delta^{-\epsilon}|A|$. We may therefore infer that
\begin{displaymath} |H_{n}| \lesssim_{N} \delta^{-2N^{N}\epsilon}|A| \leq \delta^{-2N^{N}\epsilon - \alpha}. \end{displaymath}
from Lemma \ref{PRIneq}. This inequality implies \eqref{a19} for small enough $\delta > 0$, recalling our choice of constants at \eqref{a14}.

Recall from \eqref{form17} that the set $B$ satisfies a Frostman condition with exponent $\kappa$:
\begin{displaymath} |B \cap B(x,r)| \leq r^{\kappa}|B|, \qquad x \in \R, \, \delta \leq r \leq \delta^{\epsilon_{0}}. \end{displaymath}
Since $B_{n} \subset B$, and $|B_{n}| \geq \delta^{N^{4N + 1}\epsilon}|B|$ by \eqref{a13} we deduce that $B_{n}$ satisfies a Frostman condition with parameters $\bar{\epsilon}_{0} = \epsilon_{0}$ (recall \eqref{a22}) and $\kappa/2$:
\begin{equation}\label{a24} |B_{n} \cap B(x,r)| \leq \delta^{-N^{4N + 1}\epsilon}r^{\kappa}|B_{n}| \stackrel{\eqref{a14}}{\leq} r^{\kappa/2}|B_{n}|, \qquad x \in \R, \, \delta \leq r \leq \delta^{\bar{\epsilon}_{0}}. \end{equation}
Moreover, since $|B| \geq \delta^{-\beta}$ by assumption (see above \eqref{form17}), we have
\begin{equation}\label{a25} |B_{n}| \geq \delta^{N^{4N + 1}\epsilon}|B| \stackrel{\eqref{a14}}{\geq} \delta^{-\bar{\beta}}. \end{equation}
Finally, we verify that the probability measure $\bar{\nu} := \nu(C_{n + 1})^{-1} \cdot \nu|_{C_{n + 1}}$ satisfies a Frostman condition with exponent $\bar{\gamma}$. Since $\nu$ itself satisfies the Frostman condition $\nu(B(x,r)) \leq r^{\gamma}$ for all $\delta \leq r \leq \delta^{\epsilon_{0}} = \delta^{\bar{\epsilon}_{0}}$, and $\nu(C_{n + 1}) \geq \delta^{\epsilon}$, we see that
\begin{displaymath} \bar{\nu}(B(x,r)) \leq \delta^{-\epsilon}\nu(B(x,r)) \leq \delta^{-\epsilon}r^{\gamma} \stackrel{\eqref{a14}}{\leq} r^{\bar{\gamma}}, \qquad x \in \R, \, \delta \leq r \leq \delta^{\bar{\epsilon}_{0}}. \end{displaymath} 
We have now reached a situation which violates Theorem \ref{mainTechnical} for the choice of parameters $\bar{\alpha},\bar{\beta},\kappa/2,\bar{\gamma}$. The objects $H_{n},B_{n},\bar{\nu}$ satisfy all the hypotheses by \eqref{a19}-\eqref{a25}, but nevertheless $|H_{n} + cB_{n}|_{\delta} < \delta^{-\bar{\epsilon}}|H_{n}|$ for all $c \in C_{n + 1}$ according to \eqref{a5}, where $C_{n + 1}$ is a set of full $\bar{\nu}$ measure. Therefore the \textbf{Counter assumption} is false, and the proof of Theorem \ref{mainSubset2} is complete.

To be precise, we have cut one corner: $H_{n}$ may not be a subset of $[0,1]$: we only know that $H_{n} \subset [0,N^{n}] \subset [0,N^{N}]$. However, one can easily fix this by picking the most $H_{n}$-populous unit interval $[r,r + 1] \subset [0,N^{n}]$, which contains $\geq N^{-N}|H_{n}| \geq \delta^{\bar{\epsilon}/4}|H_{n}|$ points of $H_{n}$ if $\delta > 0$ is small enough, and replacing $H_{n}$ by $\bar{H}_{n} := H_{n} \cap [r,r + 1] - \{r\} \subset [0,1]$. After replacing $H_{n}$ with $\bar{H}_{n}$, the estimate \eqref{form23} remains valid with constant $\delta^{-3\bar{\epsilon}/4}$ instead of $\delta^{-\bar{\epsilon}/2}$. This is still good enough to imply \eqref{a5}. \end{proof}

\section{Proof of the main theorem}\label{s:mainProof}

In this section, we will prove Theorem \ref{main} by reducing it to its toy version, Theorem \ref{mainSubset1}. We will need the asymmetric Balog-Szemer\'edi-Gowers theorem, see the book of Tao and Vu, \cite[Theorem 2.35]{MR2289012}. We state the result in the following slightly weaker form (following Shmerkin's paper \cite[Theorem 3.2]{Sh}):
\begin{thm}[Asymmetric Balog-Szemer\'edi-Gowers theorem]\label{BSG} Given $\zeta > 0$, there exists $\xi > 0$ such that the following holds for $\delta \in 2^{-\N}$ small enough. Let $A,B \subset (\delta \cdot \Z) \cap [0,1]$ be finite sets, and assume that there exist $c \in [\tfrac{1}{2},1]$ and $G \subset A \times B$ satisfying 
\begin{equation}\label{bsg1} |G| \geq \delta^{\xi}|A||B| \quad \text{and} \quad |\{x + cy : (x,y) \in G\}|_{\delta} = |\pi_{c}(G)|_{\delta} \leq \delta^{-\xi}|A|. \end{equation}
Then there exist subsets $A' \subset A$ and $B' \subset B$ with the properties
\begin{equation}\label{bsg2} |A'||B'| \geq \delta^{\zeta}|A||B| \quad \text{and} \quad |A' + cB'|_{\delta} \leq \delta^{-\zeta}|A|. \end{equation}
\end{thm}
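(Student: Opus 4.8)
The plan is to read Theorem \ref{BSG} as the $\delta$-discretised incarnation of the asymmetric Balog--Szemer\'edi--Gowers theorem for abelian groups, \cite[Theorem 2.35]{MR2289012} (the statement above being the convenient weakening from \cite[Theorem 3.2]{Sh}), so that the cleanest route is to quote those results and supply only the translation between group cardinalities and $\delta$-covering numbers. That translation is of exactly the type already needed for the Pl\"unnecke--Ruzsa inequality, Lemma \ref{PRIneq}: I would replace $A,B$ by the grid sets $A_{\delta},B_{\delta} \subset \delta \cdot \Z$, use that $|X|_{\delta} \sim |X_{\delta}|$ for bounded $X$, and observe that for $c \in [\tfrac12,1]$ each affine map $(x,y) \mapsto \pm x \pm cy$ distorts scale-$\delta$ structure by at most an absolute factor; after passing to a bounded-multiplicity coarsening of $G$ (keeping one grid point per fibre of $\pi_{c}$), the $\delta$-covering numbers of all sumsets built from $A$, $B$, and $\pi_{c}(G)$ behave like cardinalities, and the group statement applies. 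An essentially identical reduction to $\delta$-covering numbers is carried out in \cite[Corollary 3.4]{MR4283564}.

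For orientation I would also recall the shape of the underlying argument, which is graph-theoretic. From the bipartite graph $G \subset A \times B$ of edge-density $\geq \delta^{\xi}$, one first regularises the degrees by dyadic pigeonholing, losing only a polylogarithmic factor in $1/\delta$ (which is $\leq \delta^{-\xi}$ for $\delta$ small). The heart is the Balog--Szemer\'edi--Gowers graph lemma: counting paths of length two, applying Cauchy--Schwarz to locate many pairs of vertices with many common neighbours, and running a dependent-random-choice argument (pass to the neighbourhood of a well-chosen vertex, then delete the few vertices lying in too many low-multiplicity pairs) produces $A' \subset A$ and $B' \subset B$ with $|A'| \geq \delta^{O(\xi)}|A|$ and $|B'| \geq \delta^{O(\xi)}|B|$ such that every pair $(a,b) \in A' \times B'$ is joined by $\geq \delta^{O(\xi)}|A||B|$ alternating paths $a \sim b_{1} \sim a_{1} \sim b$ using three edges of $G$. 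Writing $S := \pi_{c}(G)$, so $|S|_{\delta} \leq \delta^{-\xi}|A|$, each such path provides the cancellation identity
\begin{displaymath}
 a + cb \;=\; (a + cb_{1}) \;-\; (a_{1} + cb_{1}) \;+\; (a_{1} + cb),
\end{displaymath}
with all three summands lying, up to $O(\delta)$, in $S$. Since $A$ and $B$ are $\delta$-separated and $c \geq \tfrac12$, distinct paths with a fixed endpoint pair $(a,b)$ produce, up to bounded multiplicity, distinct triples in $S \times S \times S$; hence each element of $A' + cB'$ has many representations $s_{1} - s_{2} + s_{3}$ with $s_{i} \in S$, while distinct elements consume disjoint such triples. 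This path count, fed into a routine Pl\"unnecke--Ruzsa clean-up, delivers $|A' + cB'|_{\delta} \leq \delta^{-O(\xi)}|A|$; choosing $\xi$ small enough that every exponent $O(\xi)$ above is $\leq \zeta$ finishes the argument.

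The step I expect to carry the real weight -- and the reason this is a genuinely quantitative statement rather than a qualitative one -- is the graph lemma: every loss in the path-counting and in the dependent random choice must be a fixed power of the edge-density, so that $\zeta$ comes out polynomial in $\xi$ and in particular $\zeta \to 0$ as $\xi \to 0$, as the statement requires. The regularisation and the closing sumset bookkeeping are routine; the one genuine subtlety is that $\pi_{c}$ fails to be injective on $\delta \cdot \Z^{2}$, which is precisely why the bounded-multiplicity coarsening must be carried along at every stage.
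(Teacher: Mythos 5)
Your main route -- quoting the asymmetric Balog--Szemer\'edi--Gowers theorem of Tao--Vu and Shmerkin and supplying only the passage to $\delta$-covering numbers together with the reduction of $\pi_{c}$ to $\pi_{1}$ via $B_{c} := (cB)_{\delta}$ -- is exactly what the paper does: Theorem \ref{BSG} is stated there as a citation of \cite[Theorem 2.35]{MR2289012} and \cite[Theorem 3.2]{Sh}, accompanied by a short remark carrying out the same rescaling you describe. One warning about your ``orientation'' sketch of the internal argument: the symmetric triple-count $|A' + cB'| \cdot (\#\text{ of paths}) \leq |S|^{3}$ with $|S| \leq \delta^{-\xi}|A|$ only yields $|A' + cB'| \leq \delta^{-O(\xi)}|A|^{2}/|B|$, which is strictly weaker than the required $\delta^{-O(\xi)}|A|$ precisely in the asymmetric regime $|B| \ll |A|$ that matters in this paper, so the quantitative conclusion must really be taken from the cited asymmetric versions rather than from that heuristic path count.
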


\begin{remark} In the references for Theorem \ref{BSG} cited above, the assumption $|\pi_{c}(G)|_{\delta} \leq \delta^{-\xi}|A|$ in \eqref{bsg1} is replaced by $|\pi_{1}(G)| \leq \delta^{-\xi}|A|$, and the conclusion \eqref{bsg2} is replaced by $|A' + B'| \leq \delta^{-\zeta}|A|$. For $c \in [\tfrac{1}{2},1]$, it is easy to see that the two variants of the theorem are formally equivalent. The details are left to the reader. The idea is to begin by applying the standard version of Theorem \ref{BSG} to the sets $B_{c} := (cB)_{\delta} \subset \delta \cdot \Z$ and $G_{c} := \{(x,(cy)_{\delta}) : (x,y) \in G\} \subset A \times B_{c}$, which satisfy $|B_{c}| \sim |B|$, $|G_{c}| \sim |G|$, and $|\pi_{1}(G_{c})| \lesssim \delta^{-\xi}|A|$. \end{remark}

\begin{proof}[Proof of Theorem \ref{main} assuming Theorem \ref{mainSubset1}] Let $\alpha,\beta,\gamma,\kappa$ be the constants for which we are supposed to prove Theorem \ref{main}. Thus $\gamma > (\alpha - \beta)/(1 - \beta)$. Our task is to find the constants $\epsilon,\epsilon_{0},\delta_{0} \in (0,\tfrac{1}{2}]$ such that the conclusion of Theorem \ref{main} holds. To this end, pick $\bar{\alpha} > \alpha$, $\bar{\beta} < \beta$, and $\bar{\gamma} < \gamma$ in such a way that the key inequality
\begin{displaymath} \bar{\gamma} > (\bar{\alpha} - \bar{\beta})/(1 - \bar{\beta}) \end{displaymath}
persists. This can be done explicitly in such a way that $\bar{\alpha},\bar{\beta},\bar{\gamma}$ are functions of $\alpha,\beta,\gamma$: therefore, any future dependence on $\bar{\alpha},\bar{\beta},\bar{\gamma}$ will, in fact, be a dependence on $\alpha,\beta,\gamma$.

Let $\bar{\epsilon},\bar{\epsilon}_{0},\bar{\delta}_{0}$ be the constants given by Theorem \ref{mainSubset1} applied with parameters $\bar{\alpha},\bar{\beta},\bar{\gamma},\kappa/2$. Thus, $\bar{\epsilon},\bar{\epsilon}_{0},\bar{\delta}_{0}$ are eventually functions of $\alpha,\beta,\gamma,\kappa$. We define $\epsilon,\epsilon_{0},\delta_{0}$ based on $\bar{\epsilon},\bar{\epsilon}_{0},\bar{\delta}_{0}$. First, we set $\epsilon_{0} := \bar{\epsilon}_{0}$. We also fix $\delta_{0} \in (0,\bar{\delta}_{0}]$. There will be a few additional requirements on $\delta_{0}$, depending on $\alpha,\beta,\gamma,\kappa$ only. These will be clarified when they arise. We then finally determine the constant $\epsilon$. First, we fix a natural number $N \sim 1/\bar{\epsilon}$, sufficiently large that the following holds:
\begin{equation}\label{b23} (N - 1)^{-1} < \bar{\epsilon}/2. \end{equation}
Then, we fix the auxiliary constant
\begin{equation}\label{b9} \zeta := \min\left\{\frac{\bar{\epsilon}}{20 N},\frac{\bar{\epsilon}_{0}\kappa}{4N}, \frac{\bar{\alpha} - \alpha}{2N(N + 1)},\frac{\beta - \bar{\beta}}{2N},\frac{\epsilon_{0}(\gamma - \bar{\gamma})}{2N} \right\}. \end{equation}
Now, we let $\epsilon := \xi(\zeta) > 0$ be the constant given by the Balog-Szemerer\'edi-Gowers theorem applied with the constant $\zeta > 0$ from \eqref{b9}. This means that if $G \subset A \times B$ satisfies $|G| \geq \delta^{\epsilon}|A||B|$ and $|\pi_{c}(G)|_{\delta} \leq \delta^{-\epsilon}|A|$, then there exist $A' \subset A$ and $B' \subset B$ as in \eqref{bsg2}. 

Armed with these choices of parameters, we are prepared to prove Theorem \ref{main}. Fix $\delta \in 2^{-\N}$ with $\delta \leq \delta_{0}$, and let $A,B,\nu$ be a triple satisfying the hypotheses of Theorem \ref{main} with constants $\alpha,\beta,\gamma,\kappa$. In particular, $|A| \leq \delta^{\alpha}$, and $|B| \geq \delta^{-\beta}$, and 
\begin{equation}\label{form10} |B \cap B(x,r)| \leq r^{\kappa}|B|, \qquad x \in \R, \, \delta \leq r \leq \delta^{\epsilon_{0}} = \delta^{\bar{\epsilon}_{0}}. \end{equation}
Also, recall that $\nu$ is a probability measure on $[\tfrac{1}{2},1]$ satisfying $\nu(B(x,r)) \leq r^{\gamma}$ for all $\delta \leq r \leq \delta^{\epsilon_{0}}$. We claim that there exists $c \in C := \spt(\nu)$ such that whenever $G \subset A \times B$ is a subset with $|G| \geq \delta^{\epsilon}|A||B|$, then $|\pi_{c}(G)|_{\delta} \geq \delta^{-\epsilon}|A|$. 

We make a counter assumption: the property above fails for every $c \in C$. Then, by the choice $\epsilon = \xi(\zeta)$, and Theorem \ref{BSG}, for every $c \in C$ there exist subsets $A_{c} \subset A$ and $B_{c} \subset B$, for every $c \in C$, with the properties 
\begin{equation}\label{b5} |A_{c} \times B_{c}| \geq \delta^{\zeta}|A||B| \quad \text{and} \quad |A_{c} + cB_{c}|_{\delta} \leq \delta^{-\zeta}|A|. \end{equation}
We observe that
\begin{displaymath} \int \ldots \int |(A_{c_{1}} \times B_{c_{1}}) \cap \ldots \cap (A_{c_{N}} \times B_{c_{N}})| \, d\nu(c_{1})\cdots d\nu(c_{N}) \geq \delta^{N \zeta}|A||B| \end{displaymath}
by H\"older's inequality. Using $(A \times B) \cap (C \times D) = (A \cap C) \times (B \cap D)$, and Chebyshev's inequality, and $\nu(\R) = 1$, it follows that the set
\begin{equation}\label{def:omega} \Omega := \{(c_{1},\ldots,c_{N}) \in C^{N} : |(A_{c_{1}} \cap \ldots \cap A_{c_{N}}) \times (B_{c_{1}} \cap \ldots \cap B_{c_{N}})| \geq \tfrac{1}{2}\delta^{N\zeta}|A||B|\} \end{equation}
satisfies
\begin{equation}\label{form12} \nu^{N}(\Omega) \geq \tfrac{1}{2} \cdot \delta^{N\zeta} \end{equation}
For $c_{1},\ldots,c_{n} \in C$ fixed, we define
\begin{displaymath} \Omega_{c_{1}\cdots c_{n}} := \{(c_{n + 1},\ldots,c_{N}) \in C^{N - n} : (c_{1},\ldots,c_{N}) \in \Omega\}. \end{displaymath}
It follows easily from Fubini's theorem that
\begin{equation}\label{b6} \nu^{N - n}(\Omega_{c_{1}\cdots c_{n}}) = \int \nu^{N - n - 1}(\Omega_{c_{1}\cdots c_{n}c}) \, d\nu(c) \end{equation}
for all $c_{1},\ldots,c_{n} \in C$, and $1 \leq n \leq N - 2$. The same remains true for $n = 0$, if the left hand side is interpreted as $\nu^{N}(\Omega)$, and $c_{1}\cdots c_{n}c = c$. Equation \eqref{b6} also remains valid for $n = N - 1$ if we define the notation $\nu^{N - n - 1} = \nu^{0}$ as follows:
\begin{equation}\label{b7} \nu^{0}(\Omega_{c_{1}\cdots c_{N - 1}c}) := \mathbf{1}_{\Omega}(c_{1},\ldots,c_{N - 1},c). \end{equation}
We will use this notation in the sequel. 

For $(c_{1},\ldots,c_{N}) \in C^{N}$ fixed, we define decreasing sequences of sets $\{A_{c_{1}\cdots c_{n}}\}_{n = 1}^{N}$ and $\{B_{c_{1}\cdots c_{n}}\}_{n = 1}^{N}$ as follows:
\begin{equation}\label{form14} A_{c_{1}\cdots c_{n}} := A_{c_{1}} \cap \ldots \cap A_{c_{n}} \quad \text{and} \quad B_{c_{1}\cdots c_{n}} := B_{c_{1}} \cap \ldots \cap B_{c_{n}}, \quad 1 \leq n \leq N. \end{equation}
The definition formally makes sense for $(c_{1},\ldots,c_{N}) \in C^{N}$, but will only be useful for $(c_{1},\ldots,c_{N}) \in \Omega$. Namely, if $(c_{1},\ldots,c_{N}) \in \Omega$, then it follows from the definition \eqref{def:omega} that
\begin{equation}\label{b4} |A_{c_{1}\cdots c_{n}}| \geq |A_{c_{1}\cdots c_{N}}| \geq \tfrac{1}{2} \cdot \delta^{N\zeta}|A| \quad \text{and} \quad |B_{c_{1}\cdots c_{n}}| \geq \tfrac{1}{2} \cdot \delta^{N\zeta}|B|. \end{equation}
We now construct the sets $\{H_{n}\}_{n = 1}^{N} \subset \delta \cdot \Z$. At the same time, we will construct subsets $C_{1},\ldots,C_{N} \subset C$, and points $c_{n} \in C_{n}$, $1 \leq n \leq N$, with the properties  
\begin{equation}\label{b3} \nu^{N - n}(\Omega_{c_{1}\cdots c_{n}}) \geq 2^{-n - 1}\delta^{N\zeta} \quad \text{and} \quad \nu(C_{n}) \geq 2^{-n - 1}\delta^{N\zeta}, \quad 1 \leq n \leq N. \end{equation}
In particular, the first part of \eqref{b3} with $n = N$ shows that $(c_{1},\ldots,c_{N}) \in \Omega$, recall the notation \eqref{b7}. To begin with, we define
\begin{displaymath} C_{1} := \{c \in C : \nu^{N - 1}(\Omega_{c}) \geq 2^{-2} \delta^{N\zeta}\}, \end{displaymath}
and we choose an arbitrary element $c_{1} \in C_{1}$. Since
\begin{displaymath} \int \nu^{N - 1}(\Omega_{c}) \, d\nu(c) = \nu^{N}(\Omega) \geq 2^{-1}\delta^{N\zeta} \end{displaymath}
by \eqref{form12}, and the case $n = 0$ of \eqref{b6}, we observe that $\nu(C_{1}) \geq 2^{-2}\delta^{N\zeta}$ by Chebyshev's inequality. In particular $C_{1} \neq \emptyset$. We then define
\begin{displaymath} H_{1} := c_{1}B_{c_{1}}. \end{displaymath}
%To be clear about what "$B_{c_{1}}$" means, note that if $c_{1} \in C_{1}$, then $\Omega_{c} \neq \emptyset$, and hence there are (many) choices of $(c_{2},\ldots,c_{N}) \in C^{N - 1}$ such that $(c_{1},\ldots,c_{N}) \in \Omega$. Further, the definition of $B_{c_{1}}$ in \eqref{form14} does not depend on the particular choice of $(c_{2},\cdots,c_{N}) \in C^{N - 1}$ such that $(c_{1},\ldots,c_{N}) \in \Omega$.

Assume inductively that $H_{1},\ldots,H_{n}$ and $C_{1},\ldots,C_{n} \subset C$, and $c_{j} \in C_{j}$, $1 \leq j \leq n \leq N - 1$, have already been constructed, and satisfy \eqref{b3}. We then pick an element $c_{n + 1} \in C_{n + 1}$, where
\begin{displaymath} C_{n + 1} := \{c \in C : \nu^{N - n - 1}(\Omega_{c_{1}\cdots c_{n} c}) \geq 2^{-n - 2}\delta^{N\zeta}\}, \quad 1 \leq n \leq N - 1. \end{displaymath}
For $n = N - 1$, the notation $\nu^{N - n - 1}(\Omega_{c_{1}\cdots c_{n}c})$ should be interpreted as in \eqref{b7}, so 
\begin{displaymath} C_{N} = \{c \in C : \mathbf{1}_{\Omega}(c_{1},\ldots,c_{N - 1},c) \geq 2^{-N - 1}\delta^{N\zeta}\} = \{c \in C : (c_{1},\ldots,c_{N - 1},c) \in \Omega\}. \end{displaymath}
For an arbitrary choice $c_{n + 1} \in C_{n + 1}$, we note that the first part of \eqref{b3} is satisfied with index "$n + 1$", simply by the definition of $C_{n + 1}$. 

The set $C_{n + 1}$ also satisfies the second part of \eqref{b3} with index "$n + 1$", by
\begin{displaymath} 2^{-n - 1}\delta^{N\zeta} \stackrel{\eqref{b3}}{\leq} \nu^{N - n}(\Omega_{c_{1}\cdots c_{n}}) \stackrel{\eqref{b6}}{=} \int \nu^{N - n - 1}(\Omega_{c_{1}\cdots c_{n}c}) \, d\nu(c), \end{displaymath}
and Chebyshev's inequality.

Whereas $c_{1} \in C_{1}$ was chosen arbitrarily, the element $c_{n + 1} \in C_{n + 1}$ is chosen in such a way that the quantity $|H_{n} + c_{n + 1}B_{c_{1}\cdots c_{n + 1}}|_{\delta}$ is maximised, among all possible choices $c_{n + 1} \in C_{n + 1}$. For this choice of $c_{n + 1} \in C_{n + 1}$, we define
\begin{displaymath} H_{n + 1} := (H_{n} + c_{n + 1}B_{c_{1}\cdots c_{n + 1}})_{\delta}. \end{displaymath}
Proceeding in this manner yields a sequence of sets $H_{1},\ldots,H_{N}$, and a distinguished sequence $(c_{1},\ldots,c_{N}) \in \Omega$, which we fix for the remainder of the argument. We record that if $(c_{1},\cdots,c_{n})$, $1 \leq n \leq N - 1$, is an initial sequence of $(c_{1},\cdots,c_{N})$, then
\begin{equation}\label{form13} |B_{c_{1}\cdots c_{n}c}| \geq \tfrac{1}{2}\delta^{N\zeta} |B_{c_{1}\cdots c_{n}}| \geq \delta^{\bar{\epsilon}}|B_{c_{1}\cdots c_{n}}|, \qquad c \in C_{n + 1}. \end{equation}
The second inequality simply follows from our choice of $\zeta$ at \eqref{b9}. To see the first inequality, recall from the definition of $c \in C_{n + 1}$ that (in particular) $\Omega_{c_{1}\cdots c_{n}c} \neq \emptyset$ (in the case $n = N - 1$ simply $(c_{1},\ldots,c_{n},c) \in \Omega$). This means that there exists a sequence $(c_{n + 2}',\ldots,c_{N}') \in C^{N - n - 1}$ such that $(c_{1},\ldots c_{n},c,c_{n + 2}',\ldots,c_{N}') \in \Omega$. Consequently,
\begin{displaymath} |B_{c_{1}\cdots c_{n}c}| \geq |B_{c_{1}} \cap \cdots B_{c_{n}} \cap B_{c} \cap B_{c_{n + 2}'} \cap \cdots B_{c_{N}'}| \geq \tfrac{1}{2}\delta^{N\zeta}|B| \geq \tfrac{1}{2}\delta^{N\zeta}|B_{c_{1}\cdots c_{n}}| \end{displaymath}
by the definition of $\Omega$, see \eqref{def:omega}.

Note that $H_{n} \subset (\delta \cdot \Z) \cap [0,n]$ for all $1 \leq n \leq N$ by a straightforward induction, so $|H_{n}| \leq n \delta^{-1}$. Therefore, by the pigeonhole principle, there exists an $n \in \{1,\ldots,N - 1\}$ such that
\begin{equation}\label{form22} |H_{n + 1}| \leq (N\delta^{-1})^{1/(N - 1)}|H_{n}| \leq 2\delta^{-1/(N - 1)}|H_{n}|. \end{equation}
We now consider the objects 
\begin{equation}\label{form9} \bar{A} := H_{n}, \quad \bar{B} := B_{c_{1}\cdots c_{n}}, \quad \text{and} \quad \bar{\nu} := \nu(C_{n + 1})^{-1}\nu|_{C_{n + 1}}. \end{equation}
We will show in a moment these objects satisfy the hypotheses of Theorem \ref{mainSubset1} with constants $\bar{\alpha},\bar{\beta},\kappa/2,\bar{\gamma}$, and $\bar{\epsilon}_{0}$. First, however, we conclude the proof of Theorem \ref{main}, taking this for granted. By Theorem \ref{mainSubset1}, there exists $\bar{c} \in C_{n + 1}$ (a set of full $\bar{\nu}$ measure) such that whenever $B' \subset \bar{B}$ is a set of cardinality $|B'| \geq \delta^{\bar{\epsilon}}|B|$, we have 
\begin{equation}\label{form15} |H_{n} + \bar{c}B'|_{\delta} = |\bar{A} + \bar{c}B'|_{\delta} \geq \delta^{-\bar{\epsilon}}|\bar{A}| = \delta^{-\bar{\epsilon}}|H_{n}|. \end{equation}
(To be accurate, Theorem \ref{mainSubset1} only claims this for some $\bar{c} \in \spt(\bar{\nu})$, but the proof showed, see \eqref{form25}, that actually the set of non-admissible $c \in \spt(\bar{\nu})$ have measure strictly smaller than $1$, so we can pick $c \in C_{n + 1}$.) However, for every $c \in C_{n + 1}$, the set $B' := B_{c_{1}\cdots c_{n}c} \subset B_{c_{1}\cdots c_{n}} = \bar{B}$ satisfies
\begin{equation}\label{form16} |B'| \stackrel{\eqref{form13}}{\geq} \delta^{\bar{\epsilon}}|\bar{B}| \quad \text{and} \quad |H_{n} + cB'|_{\delta} \lesssim |H_{n + 1}| \stackrel{\eqref{form22}}{\leq} 2\delta^{-1/(N - 1)}|H_{n}| \stackrel{\eqref{b23}}{\leq} \delta^{-\bar{\epsilon}/2}|H_{n}|. \end{equation}
The inequality $|H_{n} + cB'| \lesssim |H_{n + 1}|$ follows from the fact that whenever $c \in C_{n + 1}$, the set $H_{n} + c B' = H_{n} + c B_{c_{1}\cdots c_{n}c}$ is a competitor in the definition of $H_{n + 1}$. With the choice $c = \bar{c} \in C_{n + 1}$, the inequalities \eqref{form15}-\eqref{form16} are mutually incompatible for $\delta > 0$ small enough, depending on $\bar{\epsilon} = \bar{\epsilon}(\alpha,\beta,\gamma,\kappa) > 0$. A contradiction has been reached.

It remains to check that that the objects in \eqref{form22} satisfy the hypotheses of Theorem \ref{mainSubset1} with constants $\bar{\alpha},\bar{\beta},\kappa/2,\bar{\gamma}$, and $\bar{\epsilon}_{0}$. More precisely:
\begin{itemize}
\item[(a)] $|\bar{A}| \leq \delta^{-\bar{\alpha}}$,
\item[(b)] $|\bar{B}| \geq \delta^{-\bar{\beta}}$, and $\bar{B}$ satisfies a Frostman condition with exponent $\kappa/2$, for $r \in [\delta,\delta^{\bar{\epsilon}_{0}}]$,
\item[(c)] $\bar{\nu}$ satisfies a Frostman condition with exponent $\bar{\gamma}$.
\end{itemize}
We first use the Pl\"unnecke-Ruzsa inequality to establish (a), assuming that $\delta > 0$ is sufficiently small in terms of $N,\bar{\alpha}$. It is clear by induction that $H_{n}$ can be written as a sum of $n \leq N$ sets of the form $c_{m}B_{c_{1}\cdots c_{m}}$, for some $1 \leq m \leq n$. Noting that $A_{c_{1}\cdots c_{n}} \subset A_{c_{m}} \subset A$, each of these sets individually satisfies
\begin{displaymath} |A_{c_{1}\cdots c_{n}} + c_{m}B_{c_{1}\cdots c_{m}}|_{\delta} \leq |A_{c_{m}} + c_{m}B_{c_{m}}|_{\delta} \stackrel{\eqref{b5}}{\leq} \delta^{-\zeta}|A| \stackrel{\eqref{b4}}{\leq} 2\delta^{-(N + 1)\zeta}|A_{c_{1}\cdots c_{n}}|. \end{displaymath}
We may therefore infer that
\begin{displaymath} |H_{n}| \lesssim_{N} \delta^{-N(N + 1)\zeta}|A| \leq \delta^{-N(N + 1)\zeta - \alpha}. \end{displaymath}
from the Pl\"unnecke-Ruzsa inequality, Lemma \ref{PRIneq}, applied with $A_{c_{1}\cdots c_{n}}$ in place of $A$ (and finally also using $|A_{c_{1}\cdots c_{n}}| \leq |A| \leq \delta^{-\alpha}$, see above \eqref{form10}). This inequality implies $|H_{n}| \leq \delta^{-\bar{\alpha}}$ for small enough $\delta > 0$, recalling our choice of $\zeta$ at \eqref{b9}.

We move to (b). Recall from \eqref{form10} that the set $B$ satisfies the assumptions of Theorem \ref{main} with constants $\epsilon_{0},\kappa > 0$:
\begin{displaymath} |B \cap B(x,r)| \leq r^{\kappa}|B|, \qquad x \in \R, \, \delta \leq r \leq \delta^{\epsilon_{0}} = \delta^{\bar{\epsilon}_{0}}. \end{displaymath}
Since $B_{c_{1}\cdots c_{n}} \subset B$, and $|B_{c_{1}\cdots c_{n}}| \geq \tfrac{1}{2}\delta^{N\zeta}|B|$ by \eqref{b4}, we deduce that $B_{c_{1}\cdots c_{n}}$ satisfies a Frostman condition with exponent $\kappa/2$:
\begin{displaymath} |B_{c_{1}\cdots c_{n}} \cap B(x,r)| \leq 2\delta^{-N\zeta}r^{\kappa}|B_{c_{1}\cdots c_{n}}| \leq r^{\kappa/2}|B_{c_{1}\cdots c_{n}}|, \qquad x \in \R, \, \delta \leq r \leq \delta^{\bar{\epsilon}_{0}}. \end{displaymath}
The final inequality uses our choice of $\zeta$ in \eqref{b9}, and also assumes that $\delta > 0$ is sufficiently small, depending on $\bar{\epsilon}_{0},\kappa$. Moreover, since $|B| \geq \delta^{-\beta}$ by assumption, we have
\begin{displaymath} |B_{c_{1}\cdots c_{n}}| \geq \tfrac{1}{2}\delta^{N\zeta}|B| \stackrel{\eqref{b9}}{\geq} \delta^{-\bar{\beta}}. \end{displaymath}
Let us finally check (c), namely that the probability measure $\bar{\nu} = \nu(C_{n + 1})^{-1}\nu|_{C_{n + 1}}$ satisfies a Frostman condition with exponent $\bar{\gamma}$. Indeed, recalling from \eqref{b3} that $\nu(C_{n + 1}) \geq 2^{-n - 2}\delta^{N\zeta}$, we have
\begin{displaymath} \bar{\nu}(B(x,r)) \leq 2^{n + 2}\delta^{-N\zeta}\nu(B(x,r)) \leq 2^{N + 2}\delta^{-N\zeta} \cdot r^{\gamma}, \qquad x \in \R, \, \delta \leq r \leq \delta^{\bar{\epsilon}_{0}}. \end{displaymath}
Since $r^{\gamma} \leq \delta^{\epsilon_{0}(\gamma - \bar{\gamma})}r^{\bar{\gamma}}$ for $r \leq \delta^{\epsilon_{0}}$, by our choice of $\zeta$ in \eqref{b9}, the right hand side is bounded from above by $r^{\bar{\gamma}}$ for all $\delta > 0$ small enough, depending on $N,\gamma,\bar{\gamma}$ (all of which only depend on $\alpha,\beta,\gamma,\kappa$). We have now verified that the objects $\bar{A},\bar{B},\bar{\nu}$ from \eqref{form9} indeed satisfy the hypotheses of Theorem \ref{mainSubset1}. This concludes the proof of Theorem \ref{main}. \end{proof}

\section{Proof of Corollary \ref{hausdorffCor}}\label{appA}

Here is the statement once more:
\begin{cor} Let $0 < \beta \leq \alpha < 1$ and $\kappa > 0$. Then, there exists $\eta = \eta(\alpha,\beta,\kappa) > 0$ such that if $A,B \subset \R$ are Borel sets with $\Hd A = \alpha$, $\Hd B = \beta$, then 
\begin{displaymath} \Hd \{c \in \R : \Hd (A + cB) \leq \alpha + \eta\} \leq \tfrac{\alpha - \beta}{1 - \beta} + \kappa. \end{displaymath}
\end{cor}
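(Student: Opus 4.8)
The plan is to deduce Corollary~\ref{hausdorffCor} from Theorem~\ref{main} by the standard device -- essentially \cite[Theorem~1]{MR4148151} with Theorem~\ref{main} as input -- converting $\delta$-discretised incidence estimates into Hausdorff dimension estimates. It is essential to use the full strength of Theorem~\ref{main}, namely robustness over all subsets $G\subset A\times B$: as noted in the introduction, $\Hd(A+cB)$ ignores the (possibly full) box dimension of $A+cB$, so the weaker Theorem~\ref{mainTechnical} (the case $G=A\times B$) carries no Hausdorff information.

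\emph{Set-up and reductions.} Given $0<\beta\leq\alpha<1$ and $\kappa>0$, put $\gamma:=\min\{\tfrac{\alpha-\beta}{1-\beta}+\kappa,1\}$, so $\tfrac{\alpha-\beta}{1-\beta}<\gamma\leq 1$ since $\alpha<1$. Choose auxiliary parameters -- all functions of $\alpha,\beta,\kappa$ -- with $\alpha<\bar{\alpha}<1$, $0<\bar{\beta}<\bar{\beta}'<\beta$, $\tfrac{\alpha-\beta}{1-\beta}<\gamma'<\gamma$ and $\kappa_{\ast}:=\bar{\beta}'/2$, arranged (by continuity) so that $\tfrac{\bar{\alpha}-\bar{\beta}}{1-\bar{\beta}}<\gamma'$. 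Apply Theorem~\ref{main} with parameters $\bar{\alpha},\bar{\beta},\gamma',\kappa_{\ast}$ to obtain $\epsilon_{0},\epsilon,\delta_{0}$, and fix $\eta=\eta(\alpha,\beta,\kappa)>0$ so small that $\eta<\epsilon/4$ and $2\eta<\min\{\bar{\alpha}-\alpha,\alpha\}$. I claim $\Hd E\leq\gamma$, where $E:=\{c\in\R:\Hd(A+cB)\leq\alpha+\eta\}$; since $\gamma\leq\tfrac{\alpha-\beta}{1-\beta}+\kappa$, this proves the corollary. Suppose $\Hd E>\gamma$; then $\gamma<1$ (as $E\subset\R$), so $\gamma=\tfrac{\alpha-\beta}{1-\beta}+\kappa$ and $\mathcal{H}^{\gamma}(E)=\infty$. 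Using countable stability of $\Hd$, the identity $A+cB=A+(c/s)(sB)$ (which dilates $B$ and $c$ independently without affecting $\Hd(A+cB)$, so one can move the directions into $[\tfrac12,1]$), and restriction to bounded pieces, one reduces to: $A,B\subset[0,1]$ compact with $\Hd A=\alpha$, $\Hd B=\beta$, and a compact $F\subset E$, $F\subset[\tfrac12,1]$, $0<\mathcal{H}^{\gamma}(F)<\infty$. Shrinking $F$ to a positive-measure compact subset with comparable upper and lower $\gamma$-density at scales $<r_{0}$ (density theorems plus Egorov) and normalising, $\nu:=\mathcal{H}^{\gamma}|_{F}$ is a probability measure supported in $[\tfrac12,1]$ with $\nu(B(x,r))\leq C_{1}r^{\gamma}$ for $r<r_{0}$, $C_{1}$ absolute. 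Fix also Frostman probability measures $\mu_{A}$, $\mu_{B}$ on compact subsets of $A$, $B$ with $\mu_{A}(B(x,r))\leq Cr^{\alpha-\eta}$, $\mu_{B}(B(x,r))\leq Cr^{\bar{\beta}'}$ (possible as $\alpha-\eta<\Hd A$, $\bar{\beta}'<\Hd B$), and write $\lambda_{c}:=(\pi_{c})_{*}(\mu_{A}\times\mu_{B})$, so $\lambda_{c}(B(x,r))\leq C'r^{\alpha-\eta}$ and $\spt\lambda_{c}\subset A+cB$.

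\emph{A single good scale, and the contradiction.} Fix $N=N(\alpha,\beta,\kappa)$ large, $2^{-N}\leq\delta_{0}$, and put $\rho:=2^{-6\eta N}$. For $c\in F$, from $\mathcal{H}^{\alpha+2\eta}(A+cB)=0$ (because $\Hd(A+cB)\leq\alpha+\eta<\alpha+2\eta$) take a disjoint dyadic cover $\{Q^{c}_{i}\}$ of $A+cB$ with $\sum_{i}|Q^{c}_{i}|^{\alpha+2\eta}<\rho$; its length-$2^{-k}$ intervals number $<\rho\,2^{k(\alpha+2\eta)}$ and carry $\lambda_{c}$-mass $\leq\rho\,2^{k(\alpha+2\eta)}\cdot C'2^{-k(\alpha-\eta)}=CC'\rho\,2^{3\eta k}$, so the scales $k<N$ together carry $\lambda_{c}$-mass $<\tfrac12$. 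Hence there is $k_{c}\geq N$ (canonically chosen, so $c\mapsto k_{c}$ is Borel) for which the length-$2^{-k_{c}}$ cover intervals number $<\rho\,2^{k_{c}(\alpha+2\eta)}$ and carry $\lambda_{c}$-mass $\geq(4k_{c}^{2})^{-1}$. A pigeonhole over the scales $k\geq N$, with weights decaying geometrically in $k$ (so no upper cutoff on $k$ is needed), then produces $k^{\ast}\geq N$ with $\nu(E')\geq\delta^{\epsilon_{0}(\gamma-\gamma')}$, where $E':=\{c:k_{c}=k^{\ast}\}$ and $\delta:=2^{-k^{\ast}}\leq\delta_{0}$; after shrinking $E'$ to a compact subset of comparable $\nu$-measure, the renormalised probability measure $\bar{\nu}:=\nu(E')^{-1}\nu|_{E'}$ -- supported in $[\tfrac12,1]\cap E'$ -- satisfies $\bar{\nu}(B(x,r))\leq\nu(E')^{-1}C_{1}r^{\gamma}\leq r^{\gamma'}$ for $0<r\leq\delta^{\epsilon_{0}}$, precisely by the choice of the lower bound on $\nu(E')$. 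Now discretise at scale $\delta$: let $B_{\delta}$ be the $\delta$-discretisation of a standard $\bar{\beta}'$-regularisation of $\spt\mu_{B}$ across $[\delta,\delta^{\epsilon_{0}}]$ (so that $|B_{\delta}|\geq\delta^{-\bar{\beta}}$ and $|B_{\delta}\cap B(x,r)|\leq r^{\kappa_{\ast}}|B_{\delta}|$ for $\delta\leq r\leq\delta^{\epsilon_{0}}$), and let $A_{\delta}$ be the $\delta$-discretisation of a subset $A^{\ast}\subset A$ extracted, together with the scale $\delta$, from a near-optimal cover of a sumset $A+c_{0}B$ so that $|A_{\delta}|\leq\delta^{-\bar{\alpha}}$ while $A^{\ast}$ still meets $\mu_{A}$ substantially -- this being exactly the place where hypothesis (A) of Theorem~\ref{main} is secured, and where one follows the (intricate but standard) subset-selection of \cite{MR4148151}. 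Then $(A_{\delta},B_{\delta},\bar{\nu})$ satisfies all hypotheses of Theorem~\ref{main} with parameters $\bar{\alpha},\bar{\beta},\gamma',\kappa_{\ast}$, so the theorem yields $c\in\spt\bar{\nu}\subset E'$ with $|\pi_{c}(G)|_{\delta}\geq\delta^{-\epsilon}|A_{\delta}|$ for all $G\subset A_{\delta}\times B_{\delta}$ with $|G|\geq\delta^{\epsilon}|A_{\delta}||B_{\delta}|$. But for this $c\in E'$, take $G_{c}$ to be the $\delta$-discretisation of $\{(a,b)\in A^{\ast}\times B^{\ast}:a+cb\in\bigcup(\text{length-}\delta\text{ cover intervals of }A+cB)\}$; those cover intervals carry $\lambda_{c}$-mass $\geq(4k_{c}^{2})^{-1}=\bigl(4(\log_{2}\tfrac1\delta)^{2}\bigr)^{-1}\geq\delta^{\epsilon/2}$, which on the one hand forces $|G_{c}|\geq\delta^{\epsilon}|A_{\delta}||B_{\delta}|$ (comparing masses and cardinalities, using the Frostman bounds on $\mu_{A},\mu_{B}$), and on the other hand $|\pi_{c}(G_{c})|_{\delta}\lesssim\rho\,\delta^{-(\alpha+2\eta)}<\delta^{-\epsilon}|A_{\delta}|$ (using $|A_{\delta}|\geq\delta^{-(\alpha-\eta)+o(1)}$ and $\eta<\epsilon/4$). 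This contradicts the conclusion of Theorem~\ref{main}; hence $\Hd E\leq\gamma$.

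\emph{The main obstacle.} The overall architecture above is routine, but the pigeonholing must be done with care, and the genuinely non-cosmetic point is hypothesis (A) of Theorem~\ref{main}: it requires the \emph{upper} bound $|A_{\delta}|\leq\delta^{-\bar{\alpha}}$, which $\Hd A=\alpha$ does not supply -- the box dimension of $A$ may be $1$ -- so one cannot simply discretise $A$, but must pass to a subset adapted to a near-optimal cover, and then make that subset interact correctly with the cover-based sets $G_{c}$ used to contradict the conclusion. Coordinating this with the selection of a single admissible scale $\delta\leq\delta_{0}$ and of a set of directions of positive $\nu$-measure -- large enough (as a power of $\delta$) to renormalise into a Frostman measure of the intermediate exponent $\gamma'$, which is why the geometric weighting is used and why Theorem~\ref{main} is applied at the shifted parameters $\bar{\alpha},\bar{\beta},\gamma',\kappa_{\ast}$ -- is the technical heart of the argument, and the reason this "standard'' deduction occupies its own section.
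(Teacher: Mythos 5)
Your architecture is the same as the paper's: counter-assumption, Frostman measure $\nu$ of exponent $\gamma>(\alpha-\beta)/(1-\beta)$ on the exceptional set, covers of $A+cB$ witnessing $\Hd(A+cB)<\alpha+\eta$, a pigeonhole to a single scale $\delta$ and a set of directions of not-too-small $\nu$-measure, discretisation, and an application of Theorem \ref{main} at shifted parameters $\bar\alpha,\bar\beta,\gamma',\kappa_*$ to contradict the covering bound $|\pi_c(G_c)|_\delta\leq\delta^{-\alpha-\eta}$. Your scale-selection (a scale $k_c\geq N$ carrying mass $\geq(4k_c^2)^{-1}$ for each $c$, then a geometrically weighted pigeonhole over $k$) is a legitimate variant of the paper's single pigeonhole over $j$ with weight $j^{-2}$, and the bookkeeping of parameters is essentially sound.

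However, the step you yourself flag as ``the technical heart'' -- securing hypothesis (A), i.e.\ the upper bound $|A_\delta|\leq\delta^{-\bar\alpha}$, while keeping $A_\delta$ compatible with the sets $G_c$ -- is asserted rather than proved (``extracted \dots from a near-optimal cover of a sumset $A+c_0B$ \dots following the intricate but standard subset-selection''), and this is precisely the nontrivial content of the paper's Section \ref{appA}. The paper's mechanism is concrete and does not need an auxiliary direction $c_0$: one decomposes $A$ into level sets $A(\rho)=\{x\in A:\rho\leq\mu_A(I_\delta(x))<2\rho\}$ of the $\mu_A$-density of dyadic $\delta$-intervals (and similarly $B$), pigeonholes to fix densities $\rho_A\leq C_A\delta^{\alpha}$, $\rho_B\leq C_B\delta^{\beta}$ for which $(\mu_A(\rho_A)\times\mu_B(\rho_B))(G_c)\approx 1$ persists on a further subset $E_\delta$, and lets $A_\delta$, $B_\delta$ be the $\delta$-nets of the corresponding unions of intervals. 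The upper bound then comes from a slicing argument applied to the \emph{same} $G_c$, for any single $c\in E_\delta$: choosing $b$ with $\bar\mu_A(G_c(b))\approx 1$, one gets $|A_\delta|\approx\rho_A^{-1}\lesssim|G_c(b)|_\delta\lesssim|\pi_c(G_c)|_\delta\leq\delta^{-\alpha-\eta}\leq\delta^{-\bar\alpha}$. The density uniformisation is also what justifies your ``comparing masses and cardinalities'' step: on $A_\delta\times B_\delta$ each $\delta$-square carries mass $\sim\rho_A\rho_B\approx|A_\delta|^{-1}|B_\delta|^{-1}$, so $(\bar\mu_A\times\bar\mu_B)(G_c)\approx 1$ translates directly into $|G_{c,\delta}|\gtrapprox|A_\delta||B_\delta|$, and $G_{c,\delta}$ automatically sits inside $A_\delta\times B_\delta$ for every $c\in E_\delta$ (no coordination with a special $c_0$ is needed). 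Without carrying out this decomposition, both the verification of (A) and the lower bound on $|G_c|$ remain genuine gaps in your write-up.
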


\begin{proof}[Proof of Corollary \ref{hausdorffCor} assuming Theorem \ref{main}] It is easy to reduce to the case where $A,B$ are compact, $A,B \subset [0,1]$, and $\mathcal{H}^{\alpha}(A) > 0$ and $\mathcal{H}^{\beta}(B) > 0$. In this case, one may use Frostman's lemma \cite[Theorem 8.8]{zbMATH01249699} to find Borel probability measures $\mu_{A},\mu_{B}$ with $\spt(\mu_{A}) \subset A$, $\spt (\mu_{B}) \subset B$, and satisfying $\mu_{A}(B(x,r)) \leq C_{A}r^{\alpha}$ and $\mu_{B}(B(x,r)) \leq C_{B}r^{\beta}$ for all balls $B(x,r) \subset \R$. If $\eta > 0$ is small enough, we will show that $\Hd E \leq (\alpha - \beta)/(1 - \beta) + \kappa$, where
\begin{displaymath} E := E_{\eta} := \{c \in [\tfrac{1}{2},1] : \Hd (A + cB) < \alpha + \eta\}. \end{displaymath} 
It is easy to show (by rescaling considerations) that this implies Corollary \ref{hausdorffCor}, where $[\tfrac{1}{2},1]$ is replaced by $\R$. It is well-known that the set $E \subset [\tfrac{1}{2},1]$ is Borel. Consequently, if the inequality fails, one may use Frostman's lemma again to find a Borel probability measure $\nu$, supported on $E$, satisfying $\nu(B(x,r)) \leq C_{\nu}r^{\gamma}$ for all $x \in \R$ and $r > 0$, where $\gamma \geq (\alpha - \beta)/(1 - \beta) + \kappa$.

For future reference, we fix some parameters $\bar{\alpha} > \alpha$, $\bar{\beta} < \beta$, and $\bar{\gamma} < \gamma$ such that the inequality
\begin{equation}\label{form111} \bar{\gamma} > (\bar{\alpha} - \bar{\beta})/(1 - \bar{\beta}) \end{equation}
still holds. We then let $\bar{\epsilon},\bar{\epsilon}_{0},\bar{\delta}_{0} > 0$ be the constants provided by Theorem \ref{main} applied with parameters $\bar{\alpha},\bar{\beta},\kappa = \bar{\beta},\bar{\gamma}$. We pick $\eta > 0$ in the definition of $E$ so small that
\begin{equation}\label{form113} \eta < \min\{\bar{\epsilon},\bar{\alpha} - \alpha\}. \end{equation}

Fix $c \in \spt(\nu) \subset E$, so $\Hd (A + cB) < \alpha + \eta$. This means that for a given fixed threshold $\delta_{0} := 2^{-j_{0}} \in 2^{-\N}$ (the requirements will depend on $\alpha,\beta,\gamma,C_{A},C_{B},C_{\nu}$), one may find a countable cover $\mathcal{I}_{c}$ of $A + cB$, consisting of disjoint dyadic intervals of length $\ell(I) \leq \delta_{0}$, such that
\begin{equation}\label{form100} \sum_{I \in \mathcal{I}_{c}} \ell(I)^{\alpha + \eta} \leq 1. \end{equation} 
Below, we will often write that something holds "for small enough $\delta > 0$": this will always mean "assuming that the upper bound $\delta_{0}$ for $\delta$ has been chosen sufficiently small, depending on the parameters $\alpha,\beta,\gamma,C_{A},C_{B},C_{\nu}$. In particular, we will take $\delta_{0} \leq \bar{\delta}_{0}$.

The "tubes" $\mathcal{T}_{c} := \{\pi_{c}^{-1}(I)\}_{I \in \mathcal{I}_{c}}$ cover $A \times B \supset \spt(\mu_{A} \times \mu_{B})$, so
\begin{displaymath} \int_{E} \sum_{T \in \mathcal{T}_{c}} (\mu_{A} \times \mu_{B})(T) \, d\nu(c) = 1. \end{displaymath}
Recall that $\delta_{0} = 2^{-j_{0}}$, and let $\mathcal{I}_{c}^{j} := \{I \in \mathcal{I}_{c} : \ell(I) = 2^{-j}\}$ for $j \geq j_{0}$. Write also $\mathcal{T}^{j}_{c} := \{\pi_{c}^{-1}(I)\}_{I \in \mathcal{I}_{c}^{j}}$. Since $\mathcal{T}_{c} = \bigcup_{j \geq j_{0}} \mathcal{T}_{c}^{j}$, there exists $j \geq j_{0}$ such that
\begin{displaymath} \int_{E} \sum_{T \in \mathcal{T}_{c}^{j}} (\mu_{A} \times \mu_{B})(T) \, d\nu(c) \gtrsim j^{-2}. \end{displaymath}
Write $\delta := 2^{-j}$ for this index $j$. According to the estimate above, there exists a subset $E_{\delta}' \subset E$ of measure $\nu(E_{\delta}') \gtrsim j^{-2} = \log_{2}(1/\delta)^{-2}$ such that for each $c \in E_{\delta}'$, the tubes $T \in \mathcal{T}_{c}^{j}$ cover a subset $G_{c} \subset \spt (\mu_{A} \times \mu_{B})$ of measure $(\mu_{A} \times \mu_{B})(G_{c}) \gtrsim \log_{2}(1/\delta)^{-2}$. In particular, we record that
\begin{equation}\label{form99} |\pi_{c}(G_{c})|_{\delta} \leq |\mathcal{T}_{c}^{j}| \leq \delta^{-\alpha - \eta}, \qquad c \in E_{\delta}', \end{equation}
by \eqref{form100}. For the remainder of this argument, we use the notation $f \lessapprox g$ to abbreviate an inequality of the form $f \leq C\log_{2}(1/\delta)^{C}g$ for some constant $C > 0$, which may depend on the Frostman constants $\alpha,\beta,\gamma,C_{A},C_{B},C_{\nu}$. In particular, $j^{-2} = \log_{2}(1/\delta)^{-2} \gtrapprox 1$.

For $x \in \R$, let $I_{\delta}(x) \in \mathcal{D}_{\delta}$ be the unique dyadic interval of length $\delta$ with $x \in I_{\delta}(x)$. We now split the set $A$ as follows:
\begin{displaymath} A = \bigcup_{\rho \in 2^{-\N}} A(\rho) := \{x \in A : \rho \leq \mu_{A}(I_{\delta}(x)) < 2\rho\}. \end{displaymath}
We define the sets $B(\rho) \subset B$ similarly. Since $\mu_{A}(I_{\delta}(x)) \leq C_{A}\delta^{\alpha}$ and $\mu_{B}(I_{\delta}(y)) \leq C_{B}\delta^{\beta}$, we see that $A(\rho) \neq \emptyset$ implies $\rho \leq C_{A}\delta^{\alpha}$, and $B(\rho) \neq \emptyset$ implies $\rho \leq C_{\beta}\delta^{\beta}$. We also note that $A(\rho)$ can be expressed as the intersection of $A$ with certain dyadic intervals $\mathcal{A}(\rho) \subset \mathcal{D}_{\delta}$. The same is true for $B(\rho)$, for certain dyadic intervals $\mathcal{B}(\rho) \subset \mathcal{D}_{\delta}$.

Let $\mu_{A}(\rho)$ be the restriction of $\mu_{A}$ to the intervals $\mathcal{A}(\rho)$, and similarly let $\mu_{B}(\rho)$ be the restriction of $\mu_{B}$ to the intervals in $\mathcal{B}(\rho)$. Then 
\begin{equation}\label{form109} \sum_{\rho_{1}} \sum_{\rho_{2}} \int_{E_{\delta}'} (\mu_{A}(\rho_{1}) \times \mu_{B}(\rho_{2}))(G_{c}) \approx 1, \end{equation}
so it follows from the pigeonhole principle that 
\begin{displaymath} \int_{E_{\delta}'} (\mu_{A}(\rho_{A}) \times \mu_{B}(\rho_{A}))(G_{c}) \approx 1 \end{displaymath}
for some fixed choices $\rho_{A} \leq C_{A}\delta^{\alpha}$ and $\rho_{B} \leq C_{B}\delta^{\beta}$ (noting that values $\rho_{1},\rho_{2} \leq \delta^{2}$ cannot contribute substantially to the sum in \eqref{form109}). In particular, there exists a further subset $E_{\delta} \subset E_{\delta}'$ with the property $(\mu_{A}(\rho_{A}) \times \mu_{B}(\rho_{B}))(G_{c}) \approx 1$ for all $c \in E_{\delta}$. We now abbreviate
\begin{displaymath} \bar{\mu}_{A} := \mu_{A}(\rho_{A}) \quad \text{and} \quad \bar{\mu}_{B} := \mu_{B}(\rho_{B}), \end{displaymath}
so $\|\bar{\mu}_{A}\| \approx 1 \approx \|\bar{\mu}_{B}\|$. The measure $\bar{\mu}_{A}$ is supported on the closure of the intervals in $\mathcal{A}(\rho_{A})$, and $\bar{\mu}_{B}$ is supported on the closure of the intervals in $\mathcal{B}(\rho_{B})$. Let 
\begin{displaymath} A_{\delta} := (\delta \cdot \Z) \cap \left(\cup \mathcal{A}(\rho_{A}) \right) \quad \text{and} \quad B_{\delta} := (\delta \cdot \Z) \cap \left(\cup \mathcal{B}(\rho_{B}) \right). \end{displaymath}
We observe that 
\begin{equation}\label{form110} \rho_{A} \cdot |A_{\delta}| \sim \|\mu_{A}\| \approx 1 \quad \Longrightarrow \quad \rho_{A} \approx |A_{\delta}|^{-1}, \end{equation}
and similarly $\rho_{B} \approx |B_{\delta}|^{-1}$. Since $\rho_{A} \leq C_{A}\delta^{\alpha}$, we record that
\begin{equation}\label{form112} |A_{\delta}| \approx \rho_{A}^{-1} \gtrapprox \delta^{-\alpha}. \end{equation}
We next claim that, somewhat conversely, $|A_{\delta}| \leq \delta^{-\bar{\alpha}}$ if $\delta > 0$ is sufficiently small. To see this, fix an arbitrary $c \in E_{\delta}$. Since $(\bar{\mu}_{A} \times \bar{\mu}_{B})(G_{c}) \approx 1$, there exists $b \in \spt (\bar{\mu}_{B})$ such that
\begin{displaymath} \bar{\mu}_{A}(G_{c}(b)) \approx 1, \quad \text{where} \quad  G_{c}(b) = \{x \in \spt(\bar{\mu}_{A}) : (x,b) \in G_{c}\}. \end{displaymath}
Now, if $\mathcal{G}_{c}(b) := \{I \in \mathcal{A}(\rho_{A}) : G_{c}(b) \cap I \neq \emptyset\}$, we see that $\bar{\mu}_{A}(I) \sim \rho_{A}$ for all $I \in \mathcal{G}_{c}(b)$, and $\bar{\mu}_{A}(\cup \mathcal{G}_{c}(b)) \geq \bar{\mu}_{A}(G_{c}(b)) \approx 1$. Moreover, we observe that $|G_{c}(b)|_{\delta} \lesssim |\pi_{c}(G_{c})|_{\delta}$, since $\pi_{c}(G_{c}) \supset G_{c}(b) + bc$. Putting these observations together,
\begin{equation}\label{form101} |A_{\delta}| \stackrel{\eqref{form110}}{\approx} \rho_{A}^{-1} \lessapprox \rho_{A}^{-1} \cdot \bar{\mu}_{A}(\cup \mathcal{G}_{c}(b)) \lesssim |G_{c}(b)|_{\delta} \lesssim |\pi_{c}(G_{c})|_{\delta} \stackrel{\eqref{form99}}{\leq} \delta^{-\alpha - \eta}. \end{equation}
Since $\alpha + \eta < \bar{\alpha}$ by \eqref{form113}, the inequality $|A_{\delta}| \leq \delta^{-\bar{\alpha}}$ holds for $\delta > 0$ sufficiently small.

Next, since $\rho_{B} \leq C_{B}\delta^{\beta}$, we record that 
\begin{equation}\label{form102} |B_{\delta}| \approx \rho_{B}^{-1} \gtrapprox \delta^{-\beta} \quad \Longrightarrow \quad |B_{\delta}| \geq \delta^{\bar{\beta}}, \end{equation}
where the implication holds if $\delta > 0$ is sufficiently small. Moreover, for $x \in \R$ and $r \geq \delta$, we note that every point $y \in B_{\delta} \cap B(x,r)$ is contained in an interval $I_{y}(\delta) \in \mathcal{B}(\rho_{B})$ with $\mu_{B}(I_{y}(\delta)) \geq \rho_{B}$. Since $I_{y}(\delta) \subset B(x,2r)$, we deduce that 
\begin{equation}\label{form103} |B_{\delta} \cap B(x,r)| \leq \rho_{B}^{-1} \cdot \mu_{B}(B(x,2r)) \leq \rho_{B}^{-1} \cdot C_{B}(2r)^{\beta} \lessapprox r^{\beta}|B_{\delta}|. \end{equation}
In particular, for the parameter $\bar{\epsilon}_{0} > 0$ fixed below \eqref{form111}, we have $|B_{\delta} \cap B(x,r)| \leq r^{\bar{\beta}}|B_{\delta}|$ for $\delta \leq r \leq \delta^{\bar{\epsilon}_{0}}$, provided that $\delta > 0$ is small enough.

Finally, the measure $\nu_{\delta} := \nu(E_{\delta})^{-1} \cdot \nu|_{E_{\delta}}$ satisfies
\begin{equation}\label{form104} \nu_{\delta}(B(x,r)) \lessapprox \nu(B(x,r)) \leq C_{\nu}r^{\gamma}, \qquad r > 0, \end{equation}
so the inequality $\nu_{\delta}(B(x,r)) \leq r^{-\bar{\gamma}}$ holds for all $r \leq \delta^{\bar{\epsilon}_{0}}$, provided that $\delta > 0$ is small enough. The estimates \eqref{form101}-\eqref{form104}, and \eqref{form111}, imply that the triple $A_{\delta},B_{\delta},\nu_{\delta}$ satisfies all the hypotheses of Theorem \ref{main} with constants $\bar{\alpha},\bar{\beta},\kappa = \bar{\beta},\bar{\gamma}$, and $\bar{\epsilon}_{0}$. Consequently, there exists $c \in E_{\delta} \subset E_{\delta}'$ (a set of full $\nu_{\delta}$ measure) such that
\begin{equation}\label{form114} |\pi_{c}(G)|_{\delta} \geq \delta^{-\bar{\epsilon}}|A_{\delta}| \stackrel{\eqref{form112}}{\gtrapprox} \delta^{-\alpha - \bar{\epsilon}} \end{equation}
for all subsets $G \subset A_{\delta} \times B_{\delta}$ of cardinality $|G| \geq \delta^{\bar{\epsilon}}|A||B|$. We argue that this contradicts \eqref{form99}. The only issue is that set $G_{c} \subset \spt(\mu_{A} \times \mu_{B})$ is not exactly a subset of $A_{\delta} \times B_{\delta}$. To fix this, recall that nevertheless $(\bar{\mu}_{A} \times \bar{\mu}_{B})(G_{c}) \approx 1$. Let 
\begin{displaymath} \mathcal{G}_{c} := \{I \times J \in \mathcal{A}(\rho_{A}) \times \mathcal{B}(\rho_{B}) : (I \times J) \cap G_{c} \neq \emptyset\}. \end{displaymath}
Then $\mathcal{G}_{c}$ is a cover of $G_{c}$, and $(\bar{\mu}_{A} \times \bar{\mu}_{B})(Q) \sim \rho_{A} \rho_{B} \approx |A_{\delta}|^{-1}|B_{\delta}|^{-1}$ for all $Q = I \times J \in \mathcal{G}_{c}$. Consequently,
\begin{displaymath} |\mathcal{G}_{c}| \gtrsim (\rho_{A}\rho_{B})^{-1} \cdot (\bar{\mu}_{A} \times \bar{\mu}_{B})(G_{c}) \approx |A_{\delta}||B_{\delta}|. \end{displaymath}
Now, let $G_{c,\delta} \subset (A_{\delta} \times B_{\delta}) \cap G_{c}(2\delta)$ be subset of cardinality $|G_{c,\delta}| \gtrapprox |A_{\delta}||B_{\delta}|$. In particular $|G_{c,\delta}| \geq \delta^{\bar{\epsilon}}|A_{\delta}||B_{\delta}|$ for $\delta > 0$ small enough. Therefore the estimate \eqref{form114} holds for $G = G_{c,\delta}$. On the other hand, since $G_{c,\delta} \subset G_{c}(2\delta)$, we have
\begin{displaymath} |\pi_{c}(G_{c,\delta})|_{\delta} \lesssim |\pi_{c}(G_{c})|_{\delta} \leq \delta^{-\alpha - \eta} \end{displaymath}
by \eqref{form99}. Since we chose $\eta < \bar{\epsilon}$ in \eqref{form113}, this estimate is not compatible with \eqref{form114}. A contradiction has been reached, and the proof of Corollary \ref{hausdorffCor} is complete. \end{proof}

\bibliographystyle{plain}
\bibliography{references}

\end{document}